\RequirePackage{etex}
\documentclass[12pt,a4paper]{amsart}

\usepackage[T1]{fontenc}
\usepackage{lmodern}
\usepackage[english]{babel}
\usepackage{textcomp}

\usepackage{amsmath, amsthm, amssymb, mathtools}
\usepackage{mathrsfs}
\usepackage{mathrsfs}
\usepackage[cal=rsfso,scaled=1.05]{mathalfa}
\usepackage{bbm, bm, stmaryrd, upgreek}
\usepackage{fixmath}

\usepackage{tikz-cd}
\usepackage[all]{xy}

\usepackage{graphicx}
\graphicspath{ {./images/} }
\usepackage{pdfpages}
\usepackage{float}
\usepackage{array, booktabs, tabularx, multirow, multicol}
\usepackage{xurl}
\usepackage{ragged2e}

\raggedbottom
\pretolerance=10000

\makeatletter
\renewcommand\normalsize{%
    \@setfontsize\normalsize{11.7}{14pt plus .3pt minus .3pt}%
    \abovedisplayskip 10\p@ \@plus4\p@ \@minus4\p@
    \abovedisplayshortskip 6\p@ \@plus2\p@
    \belowdisplayshortskip 6\p@ \@plus2\p@
    \belowdisplayskip \abovedisplayskip}
\renewcommand\small{%
    \@setfontsize\small{9.5}{12\p@ plus .2\p@ minus .2\p@}%
    \abovedisplayskip 8.5\p@ \@plus4\p@ \@minus1\p@
    \belowdisplayskip \abovedisplayskip
    \abovedisplayshortskip \abovedisplayskip
    \belowdisplayshortskip \abovedisplayskip}
\renewcommand\footnotesize{%
    \@setfontsize\footnotesize{8.5}{9.25\p@ plus .1pt minus .1pt}%
    \abovedisplayskip 6\p@ \@plus4\p@ \@minus1\p@
    \belowdisplayskip \abovedisplayskip
    \abovedisplayshortskip \abovedisplayskip
    \belowdisplayshortskip \abovedisplayskip}

\setlength\parindent    {30\p@}
\setlength\textwidth    {412\p@}
\setlength\textheight   {570\p@}
\paperwidth=210mm
\paperheight=260mm

\ifdefined\pdfpagewidth
  \setlength{\pdfpagewidth}{\paperwidth}
  \setlength{\pdfpageheight}{\paperheight}
\else
  \setlength{\pagewidth}{\paperwidth}
  \setlength{\pageheight}{\paperheight}
\fi
\calclayout
\makeatother

\usepackage{enumitem}
\setlist{nosep}
\setlist[enumerate]{label=(\roman*)}

\newlist{defenum}{enumerate}{3}
\setlist[defenum]{label=(\alph*),leftmargin=*,align=left}
\newlist{defsubenum}{enumerate}{1}
\setlist[defsubenum]{label=(\roman*),leftmargin=*,align=left}

\usepackage{etoolbox}
\AtBeginEnvironment{thebibliography}{%
  \setlength{\itemsep}{0pt}%
  \setlength{\parskip}{0pt}%
  \setlength{\topsep}{0pt}%
  \setlength{\partopsep}{0pt}%
}

\theoremstyle{plain}
\newtheorem{theorem}{Theorem}[section]
\newtheorem{lemma}[theorem]{Lemma}
\newtheorem{proposition}[theorem]{Proposition}
\newtheorem{corollary}[theorem]{Corollary}

\theoremstyle{definition}
\newtheorem{definition}[theorem]{Definition}
\newtheorem{example}[theorem]{Example}

\theoremstyle{remark}





\DeclareMathOperator{\id}{id}

\DeclareMathOperator{\Ad}{Ad}

\DeclareMathOperator{\Ricci}{Ric}

\newcommand{\ga}{\mathit{g}} 





\newcolumntype{L}{>{$}l<{$}}
\newcolumntype{C}{>{$}c<{$}}
\newcolumntype{R}{>{$}r<{$}}


\usepackage[colorlinks,pagebackref,hypertexnames=false]{hyperref}
\hypersetup{linkcolor=red, citecolor=blue, filecolor=blue, urlcolor=blue}

\usepackage[nameinlink,noabbrev]{cleveref}
\crefname{theorem}{Theorem}{Theorems}
\crefname{Mtheorem}{Main Theorem}{Main Theorems}
\crefname{lemma}{Lemma}{Lemmata}
\crefname{corollary}{Corollary}{CorCorollaries}
\crefname{proposition}{Proposition}{Propositions}
\crefname{definition}{Definition}{Definitions}

\usepackage{autonum}

\usepackage[alphabetic,backrefs]{amsrefs}



\title{The $G$-equivariant Kazdan--Warner problem}

\author[Cavenaghi]{Leonardo F. Cavenaghi}
\address{Institute of Mathematics and Informatics, Bulgarian Academy of Sciences}
\email{leonardofcavenaghi@gmail.com}

\author[do \'O]{João Marcos do Ó}
\address{Departamento de Matemática -- Universidade Federal da Paraíba - Campus 1, 1st floor - Lot. Cidade Universitária
58051-900, João Pessoa, PB, Brazil}
\email{jmbo@mat.ufpb.br}

\author[Speran\c ca]{Llohann D. Speran\c ca}
\address{Instituto de Ciência e Tecnologia -- Unifesp, Avenida Cesare Mansueto Giulio Lattes, 1201, 12247-014, São José dos Campos, SP, Brazil}
\email{lsperanca@gmail.com}

\begin{document}

\begin{abstract}
We establish an equivariant analogue of the Kazdan--Warner trichotomy for admissible scalar curvature functions. Let $M$ be a closed connected manifold of dimension $n \ge 3$ equipped with an effective isometric action of a compact connected Lie group $G$ of cohomogeneity at least one and with no zero-dimensional orbits. All metrics and prescribed functions are taken to be $G$-invariant. We prove that, for such pairs $(M, G)$, the classical trichotomy does not extend verbatim. A distinct class emerges, consisting of \emph{totally $G$-positive} pairs, for which every $G$-invariant metric exhibits positive total scalar curvature. Each such pair admits a $G$-invariant metric of positive constant scalar curvature, but admits no metric of zero or negative constant scalar curvature. Every remaining pair falls into exactly one of three classes mirroring the original trichotomy.
\end{abstract}

\maketitle

\section{Introduction}\label{sec:intro}

A landmark result of Kazdan and Warner \cite{kazadanannals} determines, for a connected closed manifold $M^n$ with $n \ge 3$, which smooth functions arise as the scalar curvature of some Riemannian metric. Their classification sorts all such manifolds into a trichotomy: every connected closed $M^n$ ($n \ge 3$) belongs to exactly one of three mutually exclusive classes.
\begin{itemize}
  \item[($\mathscr{P}$)] $M$ admits a metric of positive scalar curvature; in this case, \emph{every} $f \in C^\infty(M)$ is the scalar curvature of some metric.
  \item[($\mathscr{Z}$)] $M$ admits a scalar-flat metric but no metric of positive scalar curvature; in this case, $f \in C^\infty(M)$ is a scalar curvature if and only if $f \equiv 0$ or $f$ is negative somewhere.
  \item[($\mathscr{N}$)] $M$ admits no metric of non-negative scalar curvature; in this case, $f \in C^\infty(M)$ is a scalar curvature if and only if $f$ is negative somewhere.
\end{itemize}

This paper investigates the equivariant analogue of this classification. Throughout, $M$ is a connected closed smooth manifold of dimension $n \ge 3$, and $G$ is a compact connected Lie group acting effectively and isometrically on $M$. We assume the action has \emph{cohomogeneity at least one} (it is not transitive) and features \emph{no zero-dimensional orbits}. Restricting to $G$-invariant metrics and $G$-invariant prescribed functions, one asks: which $G$-invariant functions are the scalar curvature of a $G$-invariant metric, and do the pairs $(M,G)$ partition into three classes mirroring $\mathscr{P}$, $\mathscr{Z}$, and $\mathscr{N}$?

Our main result (Theorem~\ref{ithm:main}) proves that the Kazdan--Warner trichotomy does not extend verbatim. A new alternative emerges, possessing no counterpart in the non-equivariant setting. 

\begin{definition}\label{def:totally}
A pair $(M,G)$ is \emph{totally $G$-positive} if $\int_M \mathrm{scal}_{\ga}\,\mathrm{d}\mu_{\ga} > 0$ for every smooth $G$-invariant Riemannian metric $\ga$ on $M$.
\end{definition}

A totally $G$-positive pair cannot admit a $G$-invariant metric whose scalar curvature is everywhere non-positive. In particular, it admits no $G$-invariant metric of constant zero or negative scalar curvature. Nevertheless, it always admits a metric of positive constant scalar curvature. Such a pair therefore behaves like the class $\mathscr{P}$ in its capacity to carry positive scalar curvature, while failing the defining property of the equivariant class $\mathscr{P}^G$, namely the ability to prescribe any smooth $G$-invariant function as a scalar curvature.

Under the standing hypotheses of this paper, total $G$-positivity admits the following geometric characterization (Lemma~\ref{lem:dichotomy}): \emph{$(M, G)$ is totally $G$-positive if and only if the action is of cohomogeneity one with an isotropy-irreducible principal orbit (Definition \ref{def:isotropy-irreducible})}. For a totally $G$-positive pair $(M,G)$, a principal orbit carries a unique $G$-invariant metric $\ga_P$ up to scaling, ensuring that \emph{every} $G$-invariant metric on $M$ is a warped product $\ga = \mathrm{d}r^2 + \phi(r)^2 \ga_P$ (Lemma \ref{lem:smooth}). The orbit space is homeomorphic to one of two spaces. Either it is a circle, with $\phi$ being periodic (Example~\ref{eq:product-of} serves as the prototype), or it is a closed interval whose endpoints are \emph{exceptional} orbits, at which $\phi$ remains strictly positive but $\phi'$ vanishes (Example~\ref{ex:Bohm}). 

\begin{example}\label{eq:product-of}
Consider $M = S^1 \times S^{n-1}$ for $n \ge 3$, equipped with the action of $G = SO(n)$ defined by $g \cdot (z, x) = (z, gx)$. This action is effective and of cohomogeneity one, with all orbits being principal and of the form $\{z\} \times S^{n-1} \cong SO(n)/SO(n-1)$, possessing isotropy $SO(n-1)$. The isotropy representation is the standard action of $SO(n-1)$ on $\mathbb{R}^{n-1}$, which is irreducible over $\mathbb{R}$ for $n \ge 3$. Consequently, it has no non-zero fixed vectors, ensuring the orbits carry no non-trivial $G$-invariant $1$-forms.

Let $\ga$ be an arbitrary $G$-invariant Riemannian metric on $M$. Let $TM = \mathcal{H} \oplus \mathcal{V}$ denote the smooth direct sum of the orbit tangent spaces $\mathcal{V}$ and the line bundle $\mathcal{H} = \operatorname{span}(\partial_r)$. Because $G$ acts trivially on the $S^1$-factor, the vector field $\partial_r$ is $G$-invariant. Thus, the restriction $\omega := \ga(\partial_r,\cdot)|_{\mathcal{V}}$ defines a $G$-invariant $1$-form on each orbit. Since the orbits admit no non-zero $G$-invariant $1$-forms, $\omega \equiv 0$. This implies $\mathcal{H} \perp_{\ga} \mathcal{V}$, dictating that the metric splits orthogonally as $\ga = \ga_{\mathcal{H}} \oplus \ga_{\mathcal{V}}$.

After an appropriate arc-length reparametrization, the horizontal component becomes $\mathrm{d}r^2$ on $S^1$. For the vertical component, the irreducibility of the isotropy representation guarantees that the orbit $S^{n-1}$ carries a unique $G$-invariant metric up to scaling. Hence, $\ga$ is necessarily a warped product
\begin{equation}
    \ga = \mathrm{d}r^2 + \phi(r)^2 g_{\mathrm{round}}, \quad \text{where } \phi \colon S^1 \to (0,\infty) \text{ is smooth.}
\end{equation}

By \cite{besse1987einstein}*{Proposition~9.106}, the scalar curvature of the warped product $S^1 \times_\phi S^{n-1}$ is given by
\begin{equation}
    \mathrm{scal}_{\ga} = \frac{(n-1)(n-2)}{\phi^2} - 2(n-1)\frac{\phi''}{\phi} - (n-1)(n-2)\frac{(\phi')^2}{\phi^2}.
\end{equation}
The volume form is $\mathrm{d}\mu_{\ga} = \phi^{n-1}\,\mathrm{d}r \wedge \mathrm{d}\mu_{\mathrm{round}}$. Integration by parts yields
\begin{align}
    \int_M \mathrm{scal}_{\ga} \,\mathrm{d}\mu_{\ga} &= \mathrm{Vol}(S^{n-1})\,(n-1)(n-2)\int_{S^1} \Bigl[ \phi^{n-3} + (\phi')^2\,\phi^{n-3} \Bigr] \mathrm{d}r > 0.
\end{align}
\end{example}

\ 

\begin{theorem}\label{ithm:main}
The following classification holds for pairs $(M,G)$.
\begin{enumerate}
\item If $(M,G)$ is totally $G$-positive, then it admits a $G$-invariant metric of positive constant scalar curvature, while the scalar curvature of any $G$-invariant metric must be positive somewhere. In particular, it admits no $G$-invariant metric of zero or negative constant scalar curvature, and it belongs to none of the classes below.
\item Otherwise, $(M,G)$ belongs to exactly one of the following classes:
\begin{itemize}
\item[\textup{(}$\mathscr{P}^G$\textup{)}] every smooth $G$-invariant function is the scalar curvature of a $G$-invariant metric on $M$;
\item[\textup{(}$\mathscr{Z}^G$\textup{)}] a smooth $G$-invariant function is the scalar curvature of a $G$-invariant metric if and only if it is identically zero or negative somewhere;
\item[\textup{(}$\mathscr{N}^G$\textup{)}] a smooth $G$-invariant function is the scalar curvature of a $G$-invariant metric if and only if it is negative somewhere.
\end{itemize}
Furthermore, if $(M, G)$ is not totally $G$-positive and the Lie algebra of $G$ is non-abelian, then $(M, G) \in \mathscr{P}^G$.
\end{enumerate}
\end{theorem}

Theorem~\ref{ithm:main} diverges from the classical Kazdan--Warner trichotomy in two fundamental ways. First, the trichotomy is no longer exhaustive: the totally $G$-positive pairs constitute a genuine fourth alternative lying outside $\mathscr{P}^G$, $\mathscr{Z}^G$, and $\mathscr{N}^G$. By Lemma~\ref{lem:dichotomy}, these are precisely the cohomogeneity-one actions with an isotropy-irreducible principal orbit, indicating that the intrinsic geometry of the action governs this phenomenon. We emphasize that while $S^1 \times S^{n-1} \in \mathscr{P}$, the equivariant pair $(S^1 \times S^{n-1}, SO(n)) \notin \mathscr{P}^{SO(n)}$. Second, once the family of totally $G$-positive pairs is excluded, the classical trichotomy is recovered verbatim at the level of $G$-invariant data, preserving the identical criteria for admissible scalar curvatures. The non-abelian stipulation relies on a classical result by Lawson and Yau \cite{lawson-yau}: a non-abelian symmetry group guarantees the existence of a $G$-invariant metric of positive scalar curvature. We stress that this implication is confined to case~(2): the action detailed in Example~\ref{eq:product-of} is non-abelian yet totally $G$-positive, placing it in case~(1) rather than the class $\mathscr{P}^G$. 

\ 

Theorem~\ref{ithm:main} is deduced from a $G$-invariant Yamabe-type result, heavily influenced by the work of Hebey and Vaugon \cites{hebey1, hebey2, hebey3}.

\begin{theorem}\label{theorem:invariantyamabe}
For any pair $(M,G)$:
\begin{enumerate}[label=\textup{(\arabic*)}, font=\upshape]
  \item If $(M,G)$ is not totally $G$-positive, it admits a $G$-invariant metric of constant negative scalar curvature. Furthermore, if it admits a $G$-invariant metric of non-negative scalar curvature, it admits one of identically zero scalar curvature.
  \item If $(M,G)$ admits a $G$-invariant metric of non-identically-zero non-negative scalar curvature, it admits a $G$-invariant metric of positive constant scalar curvature.
  \item If $(M, G)$ is of cohomogeneity one with an isotropy-irreducible principal orbit, then it is totally $G$-positive. In this case, it admits a $G$-invariant metric of positive constant scalar curvature, but no metric whose scalar curvature is everywhere non-positive.
\end{enumerate}
\end{theorem}

\ 

We conclude the Introduction by formally stating the auxiliary results required for the classification.

\ 

Let $X = M/G$ denote the orbit space and $\pi \colon M \to X$ the quotient projection. By \cite{alexandrino2015lie}*{\S 3.4}, the set of principal orbits $M^{\mathrm{princ}}$ is an open dense $G$-invariant subset of $M$, and $X^* = M^{\mathrm{princ}}/G$ is a connected smooth manifold with $\dim X^* = \dim X$. Its complement, $X \setminus X^*$, consists of non-principal orbits, encompassing both singular and exceptional types. In the cohomogeneity-one setting, these non-principal orbits correspond exactly to the two endpoints when $X \cong [0, L]$, and they are entirely absent when $X \cong S^1$. Every $G$-invariant function $f \colon M \to \mathbb{R}$ descends to a unique continuous function $\bar{f} \colon X \to \mathbb{R}$ satisfying $f = \bar{f} \circ \pi$.

\begin{theorem}\label{theorem:GKW}
Let $\ga$ be a $G$-invariant Riemannian metric on $M$, and let $f \in C^\infty_G(M)$ be a smooth $G$-invariant function. If there exists a constant $c > 0$ satisfying one of the following conditions, then $M$ admits a $G$-invariant Riemannian metric with scalar curvature $f$:
\begin{enumerate}[label=\textup{(\alph*)}, font=\upshape]

  \item $\dim X \ge 2$, and the strict inequalities
  $$c\min_M f < \mathrm{scal}_{\ga}(x) < c\max_M f$$
  hold for all $x \in M$.

  \item $X$ is homeomorphic to a compact interval $[0, L]$ whose boundary points correspond to the singular or exceptional orbits of the action. Furthermore, $f$ is non-constant, and there exists a continuous weakly monotone map $h \colon X \to X$ fixing the endpoints $0$ and $L$, such that the induced functions on $X$ satisfy
  $$\overline{\mathrm{scal}}_{\ga} = c\,\bar{f} \circ h.$$

  \item $X$ is diffeomorphic to $S^1$, meaning all orbits of the action are principal. Furthermore, $f$ is non-constant, and there exists a continuous weakly monotone map $h \colon S^1 \to S^1$ of degree one such that the induced functions on $X \cong S^1$ satisfy
  $$\overline{\mathrm{scal}}_{\ga} = c\,\bar{f} \circ h.$$
  
\end{enumerate}
\end{theorem}

A foundational technical step for this prescription is the following equivariant approximation result, which allows for the rearrangement of $G$-invariant functions via $G$-equivariant diffeomorphisms. This theorem extends the Kazdan--Warner Approximation Lemma \cite{kazadanannals}*{Theorem~2.1} to isometric $G$-actions of cohomogeneity at least one.

\begin{theorem}\label{lem:approximationintro}
Let $\ga$ be a $G$-invariant Riemannian metric on $M$, and assume that the $G$-action is isometric. Let $f_1 \in C^0_G(M)$ and $f_2 \in L^p_G(M)$ for $1 \le p < \infty$. For any $\epsilon > 0$, there exists a $G$-equivariant diffeomorphism $\widetilde{\Phi} \in \mathrm{Diff}_G(M)$ such that
$$\| f_1 \circ \widetilde{\Phi} - f_2 \|_{L^p(M)} < \epsilon,$$
provided one of the following conditions holds:
\begin{enumerate}[label=\textup{(\alph*)}, font=\upshape]
  \item $\dim X \ge 2$, and $\min_M f_1 \le f_2(x) \le \max_M f_1$ for almost every $x \in M$.
  \item $X$ is homeomorphic to a compact interval $[0, L]$ whose boundary points correspond to the singular or exceptional orbits of the action. The conclusion holds if either of the following sub-conditions is satisfied:
  \begin{enumerate}[label=\textup{(b.\arabic*)}, font=\upshape]
    \item $f_2 \equiv c$ is constant with $c \in [\min_M f_1,\, \max_M f_1]$; or
    \item there exists a continuous weakly monotone map $h \colon X \to X$ fixing the boundary points $0$ and $L$, such that $\bar{f}_2 = \bar{f}_1 \circ h$ on $X$.
  \end{enumerate}
  \item $X$ is diffeomorphic to $S^1$, meaning all orbits of the $G$-action on $M$ are principal. The conclusion holds if either of the following sub-conditions is satisfied:
  \begin{enumerate}[label=\textup{(c.\arabic*)}, font=\upshape]
    \item $f_2 \equiv c$ is constant with $c \in [\min_M f_1,\, \max_M f_1]$; or
    \item there exists a continuous weakly monotone map $h \colon S^1 \to S^1$ of degree one such that $\bar{f}_2 = \bar{f}_1 \circ h$ on $X \cong S^1$.
  \end{enumerate}
\end{enumerate}
\end{theorem}

\ 

\subsection*{Organization of the Paper}
This paper is organized as follows. In Section~\ref{sec:direct}, we develop a direct approach to prescribing $G$-invariant scalar curvature, analyzing the linearization of the scalar curvature operator and establishing local surjectivity criteria. Section~\ref{sec:Yamabe} is devoted to the equivariant Yamabe problem, where we construct $G$-invariant metrics of constant scalar curvature using variational methods and investigate the topological constraints imposed by total $G$-positivity. Finally, Section~\ref{sec:classification} synthesizes these results to prove the primary classification given in Theorem~\ref{ithm:main}.

\

\

\section{A direct approach to prescribing \texorpdfstring{$G$}{G}-invariant scalar curvature}
\label{sec:direct}

We denote by $W_G^{2,p}(M; \mathrm{S}^2T^{\ast}M)$, for $p>n$, the Sobolev space of $G$-invariant symmetric $(0,2)$-tensors on $M$. The Sobolev Embedding Theorem implies that any tensor in this space is continuously differentiable. 

The scalar curvature operator $F\colon\ga\mapsto\mathrm{scal}_{\ga}$ is defined on the open cone $\mathcal{M}_G^{2,p} \subset W_G^{2,p}(M; \mathrm{S}^2T^{\ast}M)$ (which is non-empty because $G$ is compact) of $G$-invariant positive-definite symmetric tensors (Riemannian metrics). For any $\ga \in \mathcal{M}_G^{2,p}$, the problem of prescribing a smooth $G$-invariant function $K \in C^\infty_G(M)$ as the scalar curvature of a Riemannian metric on $M$ is equivalent to solving for $\ga$ the quasilinear partial differential equation:
\begin{equation}\label{eq:scalar_eqn}
    F(\ga) = K.
\end{equation}
Since the $G$-action on $\ga$ is via isometries, the scalar curvature $F(\ga)$ is $G$-invariant, and $F$ is well-defined as a map $F: \mathcal{M}_G^{2,p} \to L^p_G(M)$, where $L^p_G(M)$ denotes the Lebesgue space of $G$-invariant functions on $M$.

The linearization of $F$ at $\ga$ in the direction of a variation $h \in W_G^{2,p}(M; \mathrm{S}^2T^{\ast}M)$ is the linear operator:
\begin{equation}
    Ah := \left. \frac{d}{dt} \right|_{t=0} F(\ga + th).
\end{equation}
Following \cite{FISHERMarsden}*{Proof of Theorem 3}, $A$ is expressed as:
\begin{equation}
    Ah = -\Delta_{\ga} (\mathrm{tr}_{\ga} h) + \delta_{\ga} \delta_{\ga} h - \langle h, \mathrm{Ric}(\ga)\rangle_{\ga},
\end{equation}
where $\delta_{\ga}$ and $\Delta_{\ga}$ denote the divergence and the Laplace-Beltrami operator, respectively. Here, we adopt the conventions $\Delta_{\ga} u = \mathrm{tr}(\nabla^2 u)$, where $\nabla^2 u$ is the Hessian of $u$, and $\delta_{\ga} h = -\nabla^j h_{ij}$. The formal adjoint $A^*$ with respect to the $G$-invariant $L^2$ inner product induced by $\ga$ is given by (cf. \cite{Lichnerowicz1961}*{Section 8} or \cite{FISHERMarsden}*{Proof of Theorem 3}):
\begin{equation}\label{eq:adjoint}
    A^*u = -(\Delta_{\ga} u)\ga + \nabla^2 u - u \mathrm{Ric}(\ga).
\end{equation}

\begin{lemma}\label{lem:marsden}
Let $\ga \in \mathcal{M}_G^{2,p}$ be smooth, and let $A^*$ be the formal adjoint \eqref{eq:adjoint} of the linearization of $F$ at $\ga$. Throughout, $\ker A^*$ refers to the $G$-invariant solutions of $A^* u = 0$.
    \begin{enumerate}[label=\textup{(\alph*)}, font=\upshape]
        \item If $\ker A^*$ is non-trivial, then $F(\ga)$ is a positive constant or $\mathrm{Ric}(\ga) = 0$. 
        \item If $\ker A^* = \{0\}$, then $L = AA^* : W_G^{4,p}(M) \to L_G^p(M)$ is an isomorphism. In particular, $A: W_G^{2,p}(M; \mathrm{S}^2T^{\ast}M) \to L_G^p(M)$ is surjective.
    \end{enumerate}
\end{lemma}
\begin{proof}
    \begin{enumerate}[label=\textup{(\alph*)}, font=\upshape]
        \item Let $u \in L^2_G(M)$, $u \not\equiv 0$, satisfy $A^*u = 0$ (a priori in the distributional sense). Taking the trace of Equation \eqref{eq:adjoint} yields
        $$(n-1)\Delta_{\ga} u + F(\ga)u = 0. \quad \text{(Trace Eq)}$$
        This is a scalar second-order elliptic equation for $u$ with smooth coefficients (as $\ga$ is smooth), so elliptic regularity gives $u \in C^\infty_G(M)$, and all derivatives appearing below are classical.
        
        Assume that $F(\ga)$ is not a positive constant. We will show that $\mathrm{Ric}(\ga) = 0$. We split into two cases for $u$:
        \begin{itemize}
        \item[(a.i)] Suppose $u$ never vanishes. Applying the divergence operator $\delta_{\ga}$ to the equation $A^*u = 0$ provides a critical identity. Under the convention $\delta_{\ga} = -\nabla^j(\,\cdot\,)_{ij}$, we evaluate the divergence of each term in $A^*u = -(\Delta_{\ga} u)\ga + \nabla^2 u - u\,\mathrm{Ric}(\ga)$. Specifically, $\delta_{\ga}\bigl(-(\Delta_{\ga}u)\ga\bigr) = \mathrm{d}(\Delta_{\ga}u)$. By the Ricci identity, $\delta_{\ga}(\nabla^2u) = -\mathrm{d}(\Delta_{\ga}u) - \mathrm{Ric}(\ga)(\nabla u,\cdot)$. Finally, $\delta_{\ga}\bigl(-u\,\mathrm{Ric}(\ga)\bigr) = -u\,\delta_{\ga}\mathrm{Ric}(\ga) + \mathrm{Ric}(\ga)(\nabla u,\cdot)$. Summing these contributions and applying the contracted Bianchi identity $\delta_{\ga}\mathrm{Ric}(\ga) = -\tfrac{1}{2}\mathrm{d} F(\ga)$, all Hessian and $\mathrm{Ric}(\ga)(\nabla u,\cdot)$ terms strictly cancel, leaving $\tfrac{1}{2}\,u\,\mathrm{d} F(\ga) = 0$. Because $u \neq 0$ everywhere by assumption, we deduce that $\mathrm{d} F(\ga) = 0$. Thus, $F(\ga)$ is constant.
          \item[(a.ii)] Suppose $u$ vanishes at a point $x \in M$. If $\mathrm{d} u(x) = 0$, we can substitute the trace identity $\Delta_{\ga} u = -\frac{F(\ga)}{n-1}u$ into the equation $A^*u = 0$, yielding:
          $$ \nabla^2 u = u \left( \mathrm{Ric}(\ga) - \frac{F(\ga)}{n-1}\ga \right). $$
          For any unit-speed geodesic $\gamma$ starting at $x$, the restriction $h(s) = u(\gamma(s))$ satisfies the linear second-order ODE:
          $$ h''(s) = h(s) \left( \mathrm{Ric}(\dot{\gamma}, \dot{\gamma}) - \frac{F(\ga)}{n-1} \right). $$
          With vanishing initial conditions $h(0)=0$ and $h'(0)=0$, the uniqueness of solutions for this linear second-order ODE implies $h(s) \equiv 0$, so $u$ vanishes along every geodesic emanating from $x$ (the geodesics from $x$ cover a neighbourhood of $x$ via $\exp_x$, so $u$ vanishes on an open set. The vanishing set is then open and closed). Since $M$ is complete and connected, we conclude that $u \equiv 0$, which contradicts the assumption that $u \not\equiv 0$. Thus, $0$ is a regular value of $u$. The set $M \setminus u^{-1}(0)$ is open and dense. On this set, the argument from case (a.i) guarantees $\mathrm{d} F(\ga) = 0$. By continuity, $F(\ga)$ is constant on $M$.
        \end{itemize}
         In both cases, $F(\ga)$ must be a constant. Multiplying the (Trace Eq) by $u$ and integrating over $M$ yields:$$\int_M |\mathrm{d} u|^2 \,\mathrm{d}\mu_{\ga} = \frac{F(\ga)}{n-1} \int_M u^2 \,\mathrm{d}\mu_{\ga}\implies F(\ga) \geq 0.$$ Since $F(\ga)$ is constant and non-negative, but assumed not to be a positive constant, we must have $F(\ga) = 0$.
         
         Substituting $F(\ga) = 0$ back into the (Trace Eq) implies $\Delta_{\ga} u = 0$. Since $M$ is compact, the harmonic function $u$ must be a constant $c \neq 0$. Finally, substituting $u = c$ and $\nabla^2 u = 0$ into $A^*u = 0$ leaves $-c\,\mathrm{Ric}(\ga) = 0$, which implies $\mathrm{Ric}(\ga) = 0$.
        \item Assume $\ker A^* = \{0\}$. Consider the fourth-order linear operator $L := AA^* : W_G^{4,p}(M) \to L_G^p(M)$. The principal symbol of $L$ is $\sigma_L(\xi) = (n-1)|\xi|^4$, so $L$ is elliptic. Since $L$ commutes with the $G$-action and $G$ is compact, its restriction to the invariant component $L \colon W_G^{4,p}(M) \to L_G^p(M)$ is again elliptic and formally self-adjoint, hence Fredholm. Because $L$ is elliptic on a closed manifold, its Fredholm index is independent of $p$. Since $L$ is formally self-adjoint, its index on $L^2_G$ is zero, so its index on $W_G^{4,p}$ is also zero.
    
     The identity $\int_M u (AA^* u) \mathrm{d}\mu_{\ga} = \int_M |A^* u|^2 \mathrm{d}\mu_{\ga}$ implies that $\ker(L) = \ker(A^*)$. By hypothesis, $\ker A^* = \{0\}$, hence $L$ is injective. Because its index is zero, $L$ is an isomorphism.
    
    Given any $f \in L_G^p(M)$, let $u = L^{-1} f \in W_G^{4,p}(M)$. Since $f$ is $G$-invariant and $L$ commutes with the $G$-action, $u$ is also $G$-invariant. Set $h := A^* u$. Then $h \in W_G^{2,p}(M; \mathrm{S}^2T^{\ast}M)$ and $Ah = A(A^* u) = L u = f.$
    \end{enumerate}
\end{proof}

\ 

\begin{lemma}\label{lem:inversefunction}
Let $\ga \in \mathcal{M}_G^{2,p}$ be a smooth metric. Let $A = F'(\ga)$ and assume that $\ker A^* = \{0\}$. Then $F : \mathcal{M}_G^{2,p} \to L^p_G(M)$ is locally surjective at $\ga$. Moreover, if the prescribed scalar curvature $K$ is smooth, any solution $\ga_{\mathrm{new}}$ obtained via this method is also smooth.
\end{lemma}

\begin{proof}
Consider the map $Q: U \subset W^{4,p}_G(M) \to L^p_G(M)$ defined by $Q(u) = F(\ga + A^*u)$, where $U$ is a sufficiently small neighborhood of $0$ such that $\ga + A^*u$ remains positive-definite. Notice that such $U$ exists because $\mathcal{M}_G^{2,p}$ is open and $A^*\colon W^{4,p}_G(M) \to W^{2,p}_G(M) \hookrightarrow C^0$ is bounded. The map $Q$ is a smooth mapping of Banach spaces with its Fr\'echet derivative at the origin given by 
\begin{equation}
    Q'(0) = F'(\ga) \circ A^* = AA^*.
\end{equation}

Lemma \ref{lem:marsden} says that if $\ker A^* = \{0\}$, then $AA^*: W^{4,p}_G(M) \to L^p_G(M)$ is an isomorphism. By the Inverse Function Theorem for Banach spaces, $Q$ is a local diffeomorphism, mapping $U$ onto a neighborhood of $F(\ga)$ in $L^p_G(M)$. Thus, for any $K$ sufficiently close to $F(\ga)$ in $L^p_G(M)$, there exists $u \in W^{4,p}_G(M)$ such that $F(\ga + A^*u) = K$. 

Finally, $Q(u)=K$ is a fourth-order elliptic equation whose linearization at $u=0$ is the
elliptic operator $L=AA^*$ with smooth coefficients (as $\ga$ is smooth). Interior elliptic
regularity for nonlinear elliptic equations \cite{Morrey1966}*{Theorem~6.8.1} (bootstrapping
$u\in W^{4,p}\Rightarrow u\in C^\infty$ when $K\in C^\infty$) gives $u\in C^\infty_G(M)$.
Hence $\ga_{\mathrm{new}} = \ga + A^*u$ is smooth.
\end{proof}

\ 

\subsection{Proof of Theorem \ref{lem:approximationintro}}
\label{sec:approx}

Recall that the \emph{principal stratum} $M^{\mathrm{princ}}\subseteq M$ is the open and dense subset given as the union of the orbits of principal isotropy type \cite{alexandrino2015lie}*{\S 3.4}. We have that $X^*=M^{\mathrm{princ}}/G$ is a smooth manifold \emph{without boundary}, and $\pi\colon M^{\mathrm{princ}}\to X^*$ is a fiber bundle \cite{alexandrino2015lie}*{Theorems 3.82 and 3.95}. Throughout, we assume that the reader is familiar with the standard terminology for Riemannian submersions and metric foliations in general, as in \cite{gw}*{Chapter 1}.

\begin{lemma}\label{lem:compactlift}
Let $X^* = M^{\mathrm{princ}}/G$ be the principal orbit space with quotient projection
$\pi\colon M^{\mathrm{princ}} \to X^*$ and $(\bar{\Phi}_t)_{t \in [0,1]}$ a smooth isotopy of $X^*$
generated by a time-dependent vector field $\bar{v}_t$. If $\bar{v}_t$ has compact support
$K \subset X^*$, then its horizontal lift $\widetilde{v}_t$ extends smoothly by zero to $M$,
generating a global $G$-equivariant diffeomorphism $\widetilde{\Phi} \in \mathrm{Diff}_G(M)$
covering $\bar{\Phi}_1$, i.e.\ such that $\pi\circ \widetilde \Phi=\bar\Phi_1\circ \pi$.
\end{lemma}
\begin{proof}
Because $M$ is compact, the rbit projection $\widehat\pi: M \to M/G$ is proper. Since $K$ is a compact subset of $X^* \subset M/G$, its preimage $\widetilde{K} := \hat{\pi}^{-1}(K)$ is a compact subset of $M$. Furthermore, because $K$ lies entirely within the principal stratum $X^*$, $\widetilde{K}$ is contained in $M^{\mathrm{princ}}$ and coincides with $\pi^{-1}(K)$. 

The horizontal lift $\widetilde{v}_t$ of $\bar{v}_t$ is smooth, $G$-invariant (since the action is isometric), and supported in $\widetilde{K}$. Because $\widetilde{K}$ is a compact subset of the open submanifold $M^{\mathrm{princ}}$, extending $\widetilde{v}_t$ by zero yields a globally smooth, $G$-invariant vector field on $M$. As $M$ is a closed manifold, this extended vector field generates a global time-one flow, which is a $G$-equivariant diffeomorphism $\widetilde{\Phi} \in \mathrm{Diff}_G(M)$. Since $\widetilde{v}_t$ is $\pi$-related to $\bar{v}_t$, we have $\pi \circ \widetilde{\Phi} = \bar{\Phi}_1 \circ \pi$, concluding the proof.
\end{proof}

\begin{lemma}\label{lem:intervallift}
Assume $X = M/G$ is homeomorphic to $[0,L]$. Let $\varphi\colon[0,L]\to[0,L]$ be a smooth strictly increasing diffeomorphism
with $\varphi(0)=0$ and $\varphi(L)=L$. Assume in addition that $\varphi$ is linear near each endpoint. Then there exists a
$G$-equivariant diffeomorphism $\widetilde\Phi\in\mathrm{Diff}_G(M)$ covering $\varphi$.
\end{lemma}
\begin{proof}
Let $\varphi_t := (1-t)\,\mathrm{id} + t\,\varphi$ be the smooth isotopy of $[0,L]$ through strictly increasing diffeomorphisms fixing $0$ and $L$. Let $v_t$ be the time-dependent vector field generating this isotopy, defined by the relation $\partial_t\varphi_t = v_t \circ \varphi_t$. 

Let $p_- = 0$ and $p_+ = L$ denote the endpoints of the interval, with respective local distance coordinates $s^-$ and $s^+$. Because $\varphi$ is linear near the endpoints, there exist constants $a^{\pm} > 0$ such that $\varphi(s^{\pm}) = a^{\pm} s^{\pm}$ on sufficiently small neighborhoods of $p_{\pm}$. Consequently, each $\varphi_t$ is also linear on these neighborhoods, and $v_t$ takes the explicit form $v_t = c^{\pm}(t)\,s^{\pm}\,\partial_{s^{\pm}}$, where $c^{\pm}(t) = \frac{a^{\pm}-1}{(1-t) + t\,a^{\pm}}$. 

Let $Q_{\pm} = \pi^{-1}(p_{\pm}) \cong G/K_{\pm}$ be the singular orbits. By the slice theorem \cite{alexandrino2015lie}*{Theorem 3.57}, a $G$-invariant tubular neighborhood of $Q_{\pm}$ is equivariantly diffeomorphic to the normal bundle $G \times_{K_{\pm}} V_{\pm}$, where $V_{\pm}$ is the slice representation equipped with a $K_{\pm}$-invariant inner product. The base coordinate on the quotient is given by $s^{\pm} = \|\xi\|$ for $\xi \in V_{\pm}$. Let $E^{\pm} = \sum_i \xi^i \partial_{\xi^i}$ denote the Euler vector field on $V_{\pm}$. The vector field $(0, E^{\pm})$ on $G \times V_{\pm}$ is $G \times K_{\pm}$-invariant and descends to a smooth $G$-invariant vector field on $G \times_{K_{\pm}} V_{\pm}$, which we also denote by $E^{\pm}$. By construction, $E^{\pm}$ is smooth across the zero section $\xi=0$ (where it vanishes) and is $\pi$-related to $s^{\pm} \partial_{s^{\pm}}$. Shrinking the tubular neighborhoods to lie within the region where $\varphi_t$ is linear, we deduce that $c^{\pm}(t)E^{\pm}$ is a smooth, $G$-invariant lift of $v_t$ that vanishes identically on $Q_{\pm}$.

Over the principal stratum $M^{\mathrm{princ}} = \pi^{-1}\bigl((0,L)\bigr)$, let $\overline{v}_t$ be the unique horizontal lift of $v_t$ with respect to the prescribed $G$-invariant metric, satisfying $\mathrm{d}\pi(\overline{v}_t) = v_t \circ \pi$. 

We glue these local lifts using a partition of unity. Choose a smooth partition of unity $\{\rho_-, \rho_0, \rho_+\}$ on $[0,L]$ subordinate to the open cover $\{[0, \epsilon), (0, L), (L-\epsilon, L]\}$, where $\epsilon$ is chosen so that $\varphi_t$ is linear on $[0, \epsilon)$ and $(L-\epsilon, L]$. Define a global time-dependent vector field on $M$ by:
\[
  \widetilde{v}_t := (\rho_- \circ \pi)\, c^-(t)E^- + (\rho_0 \circ \pi)\, \overline{v}_t + (\rho_+ \circ \pi)\, c^+(t)E^+
\]
Because $\rho_{\pm} \equiv 1$ near $p_{\pm}$, the functions $\rho_{\pm} \circ \pi$ are locally constant near the singular orbits, making them smooth everywhere. Similarly, since $\operatorname{supp}(\rho_0) \subset (0,L)$, the term $(\rho_0 \circ \pi)\,\overline{v}_t$ vanishes on a neighborhood of $Q_{\pm}$, circumventing the locus where $\overline{v}_t$ is undefined. Thus, $\widetilde{v}_t$ is a globally smooth $G$-invariant vector field on $M$.

Since each summand is $\pi$-related to $v_t$ on its respective support, linearity of the pushforward yields $\mathrm{d}\pi(\widetilde{v}_t) = v_t \circ \pi$ globally. Because $M$ is a closed manifold, $\widetilde{v}_t$ generates a global, smooth $G$-equivariant isotopy $\widetilde{\Phi}_t \in \mathrm{Diff}_G(M)$. Evaluating at time $t=1$ yields the required diffeomorphism $\widetilde{\Phi} = \widetilde{\Phi}_1$, which covers $\varphi$ by construction.
\end{proof}

A continuous map $h \colon S^1 \to S^1$, with $S^1 \cong \mathbb{R}/L\mathbb{Z}$, is defined as \emph{weakly monotone of degree one} if it admits a continuous, non-decreasing lift $H \colon \mathbb{R} \to \mathbb{R}$ satisfying $H(\theta+L) = H(\theta)+L$.

\begin{lemma}\label{lem:monotoneapprox}
\noindent\textup{\textbf{(1)}} Let $h \colon [0,L] \to [0,L]$ be a weakly monotone continuous map satisfying $h(0)=0$ and $h(L)=L$. For any $\delta>0$, there exists a smooth, strictly increasing diffeomorphism $\varphi \colon [0,L] \to [0,L]$ with $\varphi(0)=0$ and $\varphi(L)=L$, which is linear near the endpoints and satisfies $\|\varphi - h\|_\infty < \delta$.

\noindent\textup{\textbf{(2)}} Let $h \colon S^1 \to S^1$ be a continuous weakly monotone map of degree one, where $S^1 \cong \mathbb{R}/L\mathbb{Z}$. For any $\delta > 0$, there exists a smooth degree-one diffeomorphism $\varphi \colon S^1 \to S^1$ such that $\|\varphi - h\|_\infty < \delta$.
\end{lemma}
\begin{proof}
Fix a smooth mollifier $\rho_\tau \ge 0$ with $\operatorname{supp}\rho_\tau \subset [-\tau,\tau]$ and $\int_{\mathbb{R}} \rho_\tau = 1$.

\textbf{(1)} For $\lambda \in (0,1)$, define $h_\lambda := (1-\lambda)h + \lambda\,\mathrm{id}$. Then $h_\lambda(0)=0$ and $h_\lambda(L)=L$. For any $x < y$,
$$h_\lambda(y) - h_\lambda(x) = (1-\lambda)\bigl(h(y) - h(x)\bigr) + \lambda(y - x) \ge \lambda(y - x) > 0,$$
demonstrating that $h_\lambda$ is strictly increasing. Furthermore,
$$\|h_\lambda - h\|_\infty = \lambda\|\mathrm{id} - h\|_\infty \le \lambda L.$$

Extend $h$ to a map $\hat{h} \colon \mathbb{R} \to \mathbb{R}$ by setting $\hat{h}(x) = x$ for $x \notin [0,L]$. Because $h(0)=0$ and $h(L)=L$, $\hat{h}$ is continuous and non-decreasing over $\mathbb{R}$. The map $\hat{h}_\lambda := (1-\lambda)\hat{h} + \lambda\,\mathrm{id}$ naturally extends $h_\lambda$. Define the smooth function $\psi := \hat{h}_\lambda * \rho_\tau$ on $\mathbb{R}$. By linearity of convolution, $\psi = (1-\lambda)(\hat{h} * \rho_\tau) + \lambda(\mathrm{id} * \rho_\tau)$. Since $(\mathrm{id} * \rho_\tau)' \equiv 1$, and since for $x_1 < x_2$,
$$(\hat{h} * \rho_\tau)(x_2) - (\hat{h} * \rho_\tau)(x_1) = \int_{\mathbb{R}} \bigl(\hat{h}(x_2 - y) - \hat{h}(x_1 - y)\bigr)\rho_\tau(y)\,\mathrm{d}y \ge 0,$$
the function $\hat{h} * \rho_\tau$ is non-decreasing, implying $(\hat{h} * \rho_\tau)' \ge 0$. Consequently,
$$\psi' \ge \lambda > 0.$$
Thus, $\psi$ is strictly increasing, and $\psi'$ is bounded for a fixed $\tau$. Because $\hat{h}_\lambda$ is uniformly continuous on $\mathbb{R}$, $\psi \to \hat{h}_\lambda$ uniformly on $[0,L]$ as $\tau \to 0$.

To ensure the map is linear near the endpoints, define a smooth function $m \colon [0, L] \to (0,\infty)$ such that $m = \psi'$ on $[2\sigma, L-2\sigma]$, $m$ is constant on $[0,\sigma]$ and on $[L-\sigma, L]$, and $m$ is bounded uniformly by $\|\psi'\|_\infty$ on the transition intervals. By construction, $m = \psi'$ everywhere except on a set of measure $4\sigma$. Therefore, $\|m - \psi'\|_{L^1} \le 8\|\psi'\|_\infty\,\sigma$, which vanishes as $\sigma \to 0$. Define $\tilde{\varphi}(x) := \psi(0) + \int_0^x m$. Because $m > 0$, $\tilde{\varphi}$ is a smooth, strictly increasing function that is affine on $[0,\sigma]$ and $[L-\sigma, L]$. Furthermore,
$$|\tilde{\varphi}(x) - \psi(x)| = \Bigl|\int_0^x (m - \psi')\Bigr| \le \|m - \psi'\|_{L^1}.$$
Finally, define the normalized map $\varphi := L\,(\tilde{\varphi} - \tilde{\varphi}(0)) / (\tilde{\varphi}(L) - \tilde{\varphi}(0))$, which is smooth, strictly increasing, affine near the endpoints, and satisfies $\varphi(0)=0$ and $\varphi(L)=L$. As $\tau, \sigma \to 0$, the limits $\tilde{\varphi}(0) = \psi(0) \to 0$ and $\tilde{\varphi}(L) = \psi(0) + \int_0^L m \to L$ ensure the scaling factor converges to $1$, yielding $\|\varphi - \tilde{\varphi}\|_\infty \to 0$. By the triangle inequality,
$$\|\varphi - h\|_\infty \le \|\varphi - \tilde{\varphi}\|_\infty + \|m - \psi'\|_{L^1} + \|\psi - h_\lambda\|_\infty + \lambda L.$$
By selecting $\lambda$, then $\tau$, and subsequently $\sigma$ sufficiently small, the right-hand side is bounded strictly by $\delta$.

\textbf{(2)} Lift $h$ to a continuous, non-decreasing function $H \colon \mathbb{R} \to \mathbb{R}$ satisfying $H(\theta+L) = H(\theta)+L$. The difference $H - \mathrm{id}$ is $L$-periodic, and therefore bounded. Define $H_\lambda := (1-\lambda)H + \lambda\,\mathrm{id}$ and $\Phi := H_\lambda * \rho_\tau$. By identical reasoning to part (1), $\Phi = (1-\lambda)(H * \rho_\tau) + \lambda(\mathrm{id} * \rho_\tau)$, which yields $\Phi' \ge \lambda > 0$, guaranteeing that $\Phi$ is smooth and strictly increasing. Moreover, 
$$\Phi(\theta+L) = \int_{\mathbb{R}} H_\lambda(\theta+L-y)\rho_\tau(y)\,\mathrm{d}y = \Phi(\theta) + L.$$
Thus, $\Phi$ descends to a smooth degree-one diffeomorphism $\varphi \colon S^1 \to S^1$. Because $\|H_\lambda - H\|_\infty = \lambda\|H - \mathrm{id}\|_\infty < \infty$, and $\Phi \to H_\lambda$ uniformly as $\tau \to 0$, the uniform distance on the circle is bounded by the uniform distance of the respective lifts. Consequently, $\|\varphi - h\|_\infty \le \|\Phi - H\|_\infty < \delta$ for sufficiently small $\lambda$ and $\tau$.
\end{proof}

\begin{definition}\label{def:admissible}
Assume $X$ is homeomorphic to $[0,L]$ or to $S^1$. A self-map $\varphi$ of $X$ is defined as an \emph{admissible reparametrization} if:
\begin{enumerate}[label=\textup{(\roman*)}, font=\upshape]
  \item for the interval type, $\varphi \colon [0,L] \to [0,L]$ is a smooth, strictly increasing diffeomorphism satisfying $\varphi(0)=0$ and $\varphi(L)=L$, which is linear near each endpoint;
  \item for the circle type, $\varphi \colon S^1 \to S^1$ is a smooth degree-one diffeomorphism.
\end{enumerate}
\end{definition}

\begin{lemma}\label{lem:onedimapprox}
Assume $\dim X = 1$ and let $\mu$ be a finite Borel measure on $X$ that is absolutely continuous with respect to the Lebesgue measure. Let $f_1 \colon X \to \mathbb{R}$ be a continuous function, and define $D := \max_X f_1 - \min_X f_1$. For any fixed $1 \le p < \infty$ and $\epsilon > 0$, the following hold:
\begin{enumerate}[label=\textup{(\arabic*)}, font=\upshape]
  \item For every $c \in [\min_X f_1, \max_X f_1]$, there exists an admissible reparametrization $\varphi$ of $X$ satisfying $\|f_1 \circ \varphi - c\|_{L^p(\mu)} < \epsilon$.
  \item If $h \colon X \to X$ is a continuous, weakly monotone map (fixing the endpoints in the interval type, or possessing degree one in the circle type), there exists an admissible reparametrization $\varphi$ of $X$ satisfying $\|f_1 \circ \varphi - f_1 \circ h\|_{L^p(\mu)} < \epsilon$.
\end{enumerate}
\end{lemma}
\begin{proof}
Because $\mu$ is absolutely continuous with respect to the Lebesgue measure, $\mu(E_\eta) \to 0$ for any family of Borel sets $E_\eta$ that shrink to the endpoints (in the interval case) or to a single point (in the circle case).

\textbf{(1)} Fix $\epsilon' > 0$ and $\eta > 0$, to be specified subsequently. Because $X$ is connected and $f_1$ is continuous, the Intermediate Value Theorem ensures that the image of $f_1$ encompasses the interval $[\min_X f_1, \max_X f_1]$. Therefore, there exists a point $\bar{x}_0 \in X$ such that $f_1(\bar{x}_0) = c$. By continuity, there exists an open neighborhood $U_0$ containing $\bar{x}_0$ upon which the oscillation of $f_1$ is strictly less than $\epsilon'$. 

Construct an admissible reparametrization $\varphi$ and designate a Borel set $E_\eta$ as follows: for the interval type, define $E_\eta := [0,\eta] \cup [L-\eta, L]$ and choose $\varphi$ such that it maps $[\eta, L-\eta]$ into $U_0$ while remaining linear near the endpoints. For the circle type, define $E_\eta := A_\eta$ as an arc of length $\eta$, and choose a degree-one $\varphi$ that maps $X \setminus A_\eta$ into $U_0$. In both instances, $\varphi(X \setminus E_\eta) \subset U_0$. Consequently, for any $r \in X \setminus E_\eta$,
$$|f_1(\varphi(r)) - c| = |f_1(\varphi(r)) - f_1(\bar{x}_0)| < \epsilon',$$
whereas $|f_1 \circ \varphi - c| \le D$ uniformly on $E_\eta$. Integrating over $X$ yields:
$$\|f_1 \circ \varphi - c\|_{L^p(\mu)}^p \le (\epsilon')^p\,\mu(X) + D^p\,\mu(E_\eta).$$
Because $\mu(E_\eta) \to 0$ as $\eta \to 0$, fixing $\epsilon'$ sufficiently small and subsequently shrinking $\eta$ ensures the right-hand side is strictly bounded by $\epsilon^p$.

\textbf{(2)} Because $X$ is a compact metric space, $f_1$ is uniformly continuous. Fix $\delta > 0$ such that for any $z, w \in X$,
$$d_X(z,w) < \delta \implies |f_1(z) - f_1(w)| < \frac{\epsilon}{\mu(X)^{1/p}}.$$
By Lemma~\ref{lem:monotoneapprox} (utilizing part (1) for the interval type, and part (2) for the circle type), there exists an admissible reparametrization $\varphi$ satisfying $\|\varphi - h\|_\infty < \delta$. Consequently, $d_X(\varphi(r), h(r)) < \delta$ for all $r \in X$, implying $|f_1(\varphi(r)) - f_1(h(r))| < \epsilon / \mu(X)^{1/p}$. Therefore,
$$\|f_1 \circ \varphi - f_1 \circ h\|_{L^p(\mu)} \le \left( \int_X \frac{\epsilon^p}{\mu(X)}\,\mathrm{d}\mu \right)^{1/p} = \epsilon.$$
\end{proof}

\ 

We move on to the proof of Theorem \ref{lem:approximationintro}.

\ 

\begin{proof}[Proof of Theorem \ref{lem:approximationintro}]
Because $G$ acts isometrically, the complement $M \setminus M^{\mathrm{princ}}$ has measure zero, and the projection $\pi \colon M^{\mathrm{princ}} \to X^*$ is a Riemannian submersion. Let $\vartheta \colon X^* \to (0,\infty)$ be the orbit-volume function defined by $\vartheta(\bar{x}) := \mathrm{Vol}\bigl(\pi^{-1}(\bar{x})\bigr)$, which is smooth and strictly positive on $X^*$. Defining the measure $\mathrm{d}\mu := \vartheta\,\mathrm{d}V_{X^*}$, the coarea formula for Riemannian submersions yields the following identity for every $G$-invariant function $H \in L^p_G(M)$:
\begin{equation}\label{eq:Lp-quotient}
  \|H\|_{L^p(M)}^p = \|\bar{H}\|_{L^p(\mu)}^p.
\end{equation}
In particular, a $G$-equivariant diffeomorphism $\widetilde{\Phi}$ covering a base map $\bar{\Phi} \colon X^* \to X^*$ satisfies $\|f_1 \circ \widetilde{\Phi} - f_2\|_{L^p(M)} = \|\bar{f}_1 \circ \bar{\Phi} - \bar{f}_2\|_{L^p(\mu)}$, because $f_1 \circ \widetilde{\Phi}$ is $G$-invariant with the induced quotient function $\bar{f}_1 \circ \bar{\Phi}$.

\begin{enumerate}[label=\textup{(\alph*)}, font=\upshape]
  \item \textbf{Case \textup{(a):} $\dim X = k \ge 2$.} Let $\epsilon > 0$. Continuous functions are dense in $L^p(X,\mu)$, and the truncation operation $t \mapsto \max\{\min_M f_1,\, \min\{t, \max_M f_1\}\}$ is a $1$-Lipschitz map that preserves the almost-everywhere range $[\min_M f_1, \max_M f_1]$ of $\bar{f}_2$. Therefore, there exists a continuous function $\bar{f}_{2,c} \colon X \to [\min_M f_1, \max_M f_1]$ satisfying $\|\bar{f}_2 - \bar{f}_{2,c}\|_{L^p(\mu)} < \epsilon/2$. Defining $f_{2,c} := \bar{f}_{2,c} \circ \pi$, Equation \eqref{eq:Lp-quotient} implies $\|f_{2,c} - f_2\|_{L^p(M)} < \epsilon/2$.

  Fix parameters $\epsilon', \delta > 0$. By the uniform continuity of $\bar{f}_{2,c}$ on the compact space $X$ and Vitali's covering lemma, one can extract finitely many disjoint open balls $B_1, \dots, B_N \subset X^*$ upon which the oscillation $\operatorname{osc}_{B_i} \bar{f}_{2,c}$ is strictly less than $\epsilon'$, and the residual measure satisfies $\mu(X^* \setminus \bigcup_i B_i) < \delta$. Choose a reference value $c_i := \bar{f}_{2,c}(q_i)$ for some $q_i \in B_i$, ensuring $c_i \in [\min_M f_1, \max_M f_1]$ and $|\bar{f}_{2,c} - c_i| < \epsilon'$ uniformly on $B_i$. For each $i$, isolate a compact subset $K_i \subset B_i$ such that $\mu(B_i \setminus K_i) < \delta/N$.

  Because $M$ is connected, the continuous function $f_1$ attains every value in the interval $[\min_M f_1, \max_M f_1]$. Consequently, the $G$-invariant open set $\{|f_1 - c_i| < \epsilon'\}$ must intersect the dense principal stratum $M^{\mathrm{princ}}$. This intersection projects to a non-empty open set $W_i \subset X^*$ upon which $|\bar{f}_1 - c_i| < \epsilon'$. Because $k \ge 2$, we may select distinct target points $\bar{z}_i \in W_i$ and corresponding open neighborhoods $U_i \subset W_i$ containing $\bar{z}_i$ such that $\operatorname{osc}_{U_i} \bar{f}_1 < \epsilon'$. Fix points $p_i \in K_i$. By the $N$-transitivity of the identity component $\mathrm{Diff}_0(X^*)$ acting on the connected $k$-manifold $X^*$ \cites{Kankaanrinta2025, Boothby}, there exists a compactly supported diffeomorphism $\Psi$, isotopic to the identity, satisfying $\Psi(p_i) = \bar{z}_i$. Select sufficiently small open balls $B_i' \subset B_i$ containing $p_i$ such that $\Psi(B_i') \subset U_i$. 

  Let $\Xi_i$ be a diffeomorphism supported exclusively in $B_i$, isotopic to the identity, ensuring $\Xi_i(K_i) \subset B_i'$. The existence of such a map follows from standard ambient isotopy extension, as $K_i$ is a compact subset of the connected open set $B_i$, and $B_i'$ is a non-empty open sub-domain. Because the supports of the respective $\Xi_i$ are mutually disjoint, the composition
  \[
    \bar{\Phi} := \Psi \circ \Xi_N \circ \cdots \circ \Xi_1
  \]
  is a diffeomorphism isotopic to the identity, compactly supported in $X^*$, which satisfies $\bar{\Phi}(K_i) \subset \Psi(B_i') \subset U_i$ for all $i$.

  For any $\bar{x} \in K_i$, the containment $\bar{\Phi}(\bar{x}), \bar{z}_i \in U_i$ coupled with the bounds on $U_i$, $W_i$, and $B_i$ yields the pointwise bound:
  \[
    |\bar{f}_1(\bar{\Phi}(\bar{x})) - \bar{f}_{2,c}(\bar{x})| \le \operatorname{osc}_{U_i} \bar{f}_1 + |\bar{f}_1(\bar{z}_i) - c_i| + |c_i - \bar{f}_{2,c}(\bar{x})| < 3\epsilon'.
  \]
  On the complement $X^* \setminus \bigcup_i K_i$, the images of both functions reside strictly within $[\min_M f_1, \max_M f_1]$. Therefore, the integrand $|\bar{f}_1 \circ \bar{\Phi} - \bar{f}_{2,c}|^p$ is bounded from above by $C := (\max_M f_1 - \min_M f_1)^p$. Because $\mu(X^* \setminus \bigcup_i K_i) \le \delta + N(\delta/N) = 2\delta$, integrating over the domain gives:
  \[
    \|\bar{f}_1 \circ \bar{\Phi} - \bar{f}_{2,c}\|_{L^p(\mu)}^p < (3\epsilon')^p\,\mu(X^*) + 2C\delta,
  \]
  which can be made strictly less than $(\epsilon/2)^p$ by choosing $\epsilon'$ and $\delta$ sufficiently small.

  By Lemma~\ref{lem:compactlift}, the compactly supported isotopy $(\bar{\Phi}_t)_{t \in [0,1]}$ lifts to a global equivariant diffeomorphism $\widetilde{\Phi} \in \mathrm{Diff}_G(M)$ covering $\bar{\Phi}$. Because $f_1 \circ \widetilde{\Phi}$ is $G$-invariant with induced function $\bar{f}_1 \circ \bar{\Phi}$, applying the norm identity \eqref{eq:Lp-quotient} and the triangle inequality concludes the case:
  \[
    \|f_1 \circ \widetilde{\Phi} - f_2\|_{L^p(M)} \le \|\bar{f}_1 \circ \bar{\Phi} - \bar{f}_{2,c}\|_{L^p(\mu)} + \|f_{2,c} - f_2\|_{L^p(M)} < \frac{\epsilon}{2} + \frac{\epsilon}{2} = \epsilon.
  \]

  \item \textbf{Case \textup{(b):} $\dim X = 1$, interval type.} In this configuration, $X^* = (0,L)$ and $X \cong [0, L]$, with the two boundary points corresponding to non-principal (singular or exceptional) orbits. The measure $\mathrm{d}\mu = \vartheta\,\mathrm{d}r$ incorporates a smooth and strictly positive function $\vartheta$ on $(0,L)$ such that $\mu(X) < \infty$. Therefore, $\mu$ is a finite, absolutely continuous measure. We apply Lemma~\ref{lem:onedimapprox} setting $\bar{f}_1$ in the role of $f_1$, where $\min_X \bar{f}_1 = \min_M f_1$ and $\max_X \bar{f}_1 = \max_M f_1$.

  In sub-case \textup{(b.1)}, where $f_2 \equiv c \in [\min_M f_1, \max_M f_1]$, Lemma~\ref{lem:onedimapprox}(1) provides an admissible reparametrization $\varphi$ satisfying $\|\bar{f}_1 \circ \varphi - c\|_{L^p(\mu)} < \epsilon$. In sub-case \textup{(b.2)}, where $\bar{f}_2 = \bar{f}_1 \circ h$ for a continuous weakly monotone map $h \colon [0,L] \to [0,L]$ fixing the endpoints, Lemma~\ref{lem:onedimapprox}(2) guarantees an admissible $\varphi$ satisfying $\|\bar{f}_1 \circ \varphi - \bar{f}_2\|_{L^p(\mu)} < \epsilon$.

  In both sub-cases, $\varphi$ is interval-admissible. Lemma~\ref{lem:intervallift} therefore constructs $\widetilde{\Phi} \in \mathrm{Diff}_G(M)$ covering $\varphi$. Because $f_1 \circ \widetilde{\Phi}$ is $G$-invariant with induced function $\bar{f}_1 \circ \varphi$, the norm identity \eqref{eq:Lp-quotient} yields
  \[
    \|f_1 \circ \widetilde{\Phi} - f_2\|_{L^p(M)} = \|\bar{f}_1 \circ \varphi - \bar{f}_2\|_{L^p(\mu)} < \epsilon.
  \]

  \medskip
  \item \textbf{Case \textup{(c):} $\dim X = 1$, circle type.} Here $X \cong S^1$, and all orbits are principal \cite{alexandrino2015lie}*{\S 6.3}, implying $M = M^{\mathrm{princ}}$ and making the global projection $\pi \colon M \to S^1$ a Riemannian submersion. Parametrizing $S^1 = \mathbb{R}/L\mathbb{Z}$, the measure is given by $\mathrm{d}\mu = \vartheta\,\mathrm{d}\theta$ with $\vartheta$ smooth and positive, such that $\mu(X) < \infty$. As before, $\mu$ is finite and absolutely continuous.

  A circle-admissible $\varphi$ is necessarily isotopic to the identity. Specifically, for a lift $\Phi \colon \mathbb{R} \to \mathbb{R}$ of $\varphi$ satisfying $\Phi(\theta+L) = \Phi(\theta)+L$, the projections $\varphi_t$ of the linear interpolation $(1-t)\,\mathrm{id} + t\,\Phi$ construct a smooth isotopy of $S^1 = X^*$ through degree-one diffeomorphisms, connecting the identity map to $\varphi$. The generating vector field for this isotopy is trivially compactly supported on $X^* = S^1$ because the manifold itself is closed. Consequently, Lemma~\ref{lem:compactlift} ensures the existence of an equivariant lift $\widetilde{\Phi} \in \mathrm{Diff}_G(M)$ covering $\varphi$.

  Applying Lemma~\ref{lem:onedimapprox} with $\bar{f}_1$ in the role of $f_1$: sub-case \textup{(c.1)} combined with part~(1) yields an admissible $\varphi$ such that $\|\bar{f}_1 \circ \varphi - c\|_{L^p(\mu)} < \epsilon$. For sub-case \textup{(c.2)}, where $\bar{f}_2 = \bar{f}_1 \circ h$ for a continuous weakly monotone degree-one map $h \colon S^1 \to S^1$, part~(2) guarantees an admissible $\varphi$ such that $\|\bar{f}_1 \circ \varphi - \bar{f}_2\|_{L^p(\mu)} < \epsilon$. Lifting $\varphi$ precisely as described above, Equation \eqref{eq:Lp-quotient} directly dictates:
  \[
    \|f_1 \circ \widetilde{\Phi} - f_2\|_{L^p(M)} = \|\bar{f}_1 \circ \varphi - \bar{f}_2\|_{L^p(\mu)} < \epsilon. \qedhere
  \]
\end{enumerate}
\end{proof}

\ 

\subsection{Proof of Theorem \ref{theorem:GKW}}

We need the following auxiliary result.

\begin{lemma}\label{lem:dense_kernel}
The set of $G$-invariant metrics $\ga \in \mathcal{M}_G^{2,p}$ for which $\ker A_{\ga}^* = \{0\}$ is open and dense in $\mathcal{M}_G^{2,p}$. In particular, if a smooth metric $\ga_0$ satisfies $C_1 < \mathrm{scal}_{\ga_0} < C_2$, there exists a smooth metric $\ga$ arbitrarily close to $\ga_0$ in $\mathcal{M}_G^{2,p}$ such that $\ker A_{\ga}^* = \{0\}$ and $C_1 < \mathrm{scal}_{\ga} < C_2$.
\end{lemma}
\begin{proof}
We have fixed everywhere $p>n$, so that metrics in $\mathcal M_G^{2,p}$ are of class $C^1$. For $\ga\in\mathcal M_G^{2,p}$ the operator
$A_\ga^*\colon W_G^{2,p}(M)\to L_G^p(M;S^2T^*M)$ is bounded and depends continuously on $\ga$ in operator norm. Its principal symbol
$\sigma_{A_\ga^*}(\xi)u=(|\xi|_\ga^2\,\ga-\xi\otimes\xi)u$ has trace
$(n-1)|\xi|_\ga^2\,u$, hence is injective for $\xi\neq0$. Thus, $A_\ga^*$ is
overdetermined elliptic and satisfies
$\|u\|_{W^{2,p}}\le C(\|A_\ga^*u\|_{L^p}+\|u\|_{L^p})$. Together with the compact
embedding $W^{2,p}\hookrightarrow L^p$, a standard argument gives
\[
  \ker A_\ga^*=\{0\}\iff \|A_\ga^*u\|_{L^p}\ge c\,\|u\|_{W^{2,p}}\ \text{for some }c>0 .
\]
Being bounded below is stable under small operator-norm perturbations and
$\ga\mapsto A_\ga^*$ is continuous, so $\{\ga:\ker A_\ga^*=\{0\}\}$ is open.

We claim that if $\ga'\in\mathcal M_G^{2,p}$ is smooth with
$\ker A_{\ga'}^*\neq\{0\}$, then there are smooth metrics $\ga_s'=(1+su)\ga'$, with $s$ arbitrarily small, such that $\ker A_{\ga_s'}^*=\{0\}$,
$\mathrm{scal}_{\ga_s'}\to\mathrm{scal}_{\ga'}$ uniformly, and
$\|\ga_s'-\ga'\|_{W^{2,p}}\to0$.

Indeed, by Lemma~\ref{lem:marsden}(a), we have that $\mathrm{scal}_{\ga'}\equiv\kappa\ge0$ is constant
($\kappa>0$, or $\mathrm{Ric}(\ga')\equiv0$ and $\kappa=0$). For $G$-invariant
$u$ and $|s|<\|u\|_\infty^{-1}$ the conformal metric $\ga_s'=(1+su)\ga'$ lies in $\mathcal M_G^{2,p}$, and, since $\left.\tfrac{d}{ds}\right|_{0}\ga_s'=u\ga'$,
the formula for $A$ gives
\[
  \left.\tfrac{d}{ds}\right|_{s=0}\mathrm{scal}_{\ga_s'}
  =A_{\ga'}(u\ga')=-\Delta_{\ga'}(n u)+\delta_{\ga'}\delta_{\ga'}(u\ga')-u\,\mathrm{scal}_{\ga'}
  =-(n-1)\Delta_{\ga'}u-\kappa u ,
\]
where we used $\delta_{\ga'}(u\ga')=-\mathrm{d} u$ and $\delta_{\ga'}(\mathrm{d} u)=-\Delta_{\ga'}u$.

Then, $\tfrac{d}{ds}|_{s=0}\mathrm{scal}_{\ga_s'}$ is constant only if $u\in E_0\oplus E_{\kappa/(n-1)}$  (for $\kappa=0$ only $E_0$), where
$E_\lambda\subset C_G^\infty(M)$ is the $G$-invariant $\lambda$-eigenspace of
$-\Delta_{\ga'}$. These eigenspaces are finite-dimensional. Since the action has cohomogeneity at least one,
the orbit space $X$ has positive dimension, so $C^\infty_G(M)\cong C^\infty(X)$ is
infinite-dimensional. Hence, we may choose $u$ outside this finite-dimensional subspace. Then $\left.\tfrac{d}{ds}\right|_{s=0}\mathrm{scal}_{\ga_s'}$ is
non-constant, hence $\mathrm{scal}_{\ga_s'}$ is non-constant for all small
$s\neq0$. Since Lemma~\ref{lem:marsden}(a) forces any metric with nontrivial kernel to have positive constant scalar curvature or to be Ricci-flat, both excluded, $\ker A_{\ga_s'}^*=\{0\}$. Finally, $\mathrm{scal}_{\ga_s'}\to\kappa$ uniformly and
$\|\ga_s'-\ga'\|_{W^{2,p}}=|s|\,\|u\ga'\|_{W^{2,p}}\to0$. This verifies the claim.

Given $\ga_0$ and $\epsilon>0$, pick a smooth $\ga_0'$ with
$\|\ga_0'-\ga_0\|_{W^{2,p}}<\epsilon/2$. If $\ker A_{\ga_0'}^*=\{0\}$ we are done.
Otherwise, the argument in the previous paragraph furnishes $\ga_s'$ with $\ker A_{\ga_s'}^*=\{0\}$ and, for
$|s|$ small and nonzero, $\|\ga_s'-\ga_0\|_{W^{2,p}}<\epsilon$.

 Let $\ga_0$ be smooth with
$C_1<\mathrm{scal}_{\ga_0}<C_2$. If $\ker A_{\ga_0}^*=\{0\}$, take $\ga=\ga_0$. Otherwise, we apply the claim above to $\ga_0$, obtaining smooth metrics $\ga_s=(1+su)\ga_0$, and set
$\epsilon_0=\min_{x\in M}\{\mathrm{scal}_{\ga_0}(x)-C_1,\,C_2-\mathrm{scal}_{\ga_0}(x)\}>0$.
For $|s|$ small and nonzero, $\ga_s$ is smooth, $\ker A_{\ga_s}^*=\{0\}$ and
$\|\mathrm{scal}_{\ga_s}-\mathrm{scal}_{\ga_0}\|_{L^\infty}<\epsilon_0$, whence
$C_1<\mathrm{scal}_{\ga_s}<C_2$. \qedhere
\end{proof}

\ 

We now proceed with the proof of Theorem \ref{theorem:GKW}.

\

\begin{proof}[Proof of Theorem \ref{theorem:GKW}]
Throughout, $\ga$ is a smooth $G$-invariant metric, $A=F'(\ga)$, and $A^*$ is its formal adjoint \eqref{eq:adjoint}. 

\medskip
\noindent\textbf{Case \textup{(a):} $\dim X \ge 2$.}
By hypothesis $c\min_M f < \mathrm{scal}_{\ga} < c\max_M f$. If $\ker A^*=\{0\}$ we keep $\ga$ unchanged. Otherwise, Lemma~\ref{lem:dense_kernel}, applied with $C_1=c\min_M f$ and $C_2=c\max_M f$, provides a smooth $G$-invariant metric, which we still denote by $\ga$, arbitrarily $W^{2,p}$-close to the original, with $\ker A_{\ga}^*=\{0\}$ and $c\min_M f<\mathrm{scal}_{\ga}<c\max_M f$. In either case $\ker A_{\ga}^*=\{0\}$. By Lemma~\ref{lem:inversefunction}, $F$ is locally surjective at $\ga$ onto an open neighbourhood $\widetilde U\subset L^p_G(M)$ of $\mathrm{scal}_{\ga}$. Fix $\epsilon>0$ with the $L^p$-ball $B(\mathrm{scal}_{\ga},\epsilon)\subset\widetilde U$.

Apply Theorem~\ref{lem:approximationintro}(a) with the function $cf$ in the role of ``$f_1$'' and $\mathrm{scal}_{\ga}$ in the role of ``$f_2$'': since $\min_M(cf)=c\min_M f\le\mathrm{scal}_{\ga}\le c\max_M f=\max_M(cf)$ and $\dim X\ge2$, there is $\widetilde\Phi\in\mathrm{Diff}_G(M)$ with $\|cf\circ\widetilde\Phi-\mathrm{scal}_{\ga}\|_{L^p}<\epsilon$. As $f$ is $G$-invariant and $\widetilde\Phi$ is $G$-equivariant, $cf\circ\widetilde\Phi\in L^p_G(M)$, hence $cf\circ\widetilde\Phi\in\widetilde U$.

\medskip
\noindent\textbf{Cases \textup{(b) and \textup(c):} the cohomogeneity-one cases $X\cong[0,L]$ and $X\cong S^1$.}
The hypothesis provides a continuous weakly monotone $h$ with $\overline{\mathrm{scal}}_{\ga}=c\,\bar f\circ h$. In both cases $h$ is surjective: in case~(b), $h(0)=0$ and $h(L)=L$ together with weak monotonicity force $h$ to be non-decreasing, so $h([0,L])=[0,L]$ by the intermediate value theorem. In case~(c), a degree-one continuous map $S^1\to S^1$ is surjective. As $f$ is non-constant and $c>0$, surjectivity of $h$ gives $\mathrm{im}(\overline{\mathrm{scal}}_{\ga})=\mathrm{im}(c\,\bar f\circ h)=\mathrm{im}(cf)$, a non-degenerate interval. Thus $\mathrm{scal}_{\ga}$ is non-constant. 

By Lemma~\ref{lem:marsden}(a), a non-trivial kernel would force $\mathrm{scal}_{\ga}$ to be a positive constant or $\mathrm{Ric}(\ga)\equiv0$ (whence $\mathrm{scal}_{\ga}\equiv0$). Because both are excluded, we have $\ker A^*=\{0\}$. By Lemma~\ref{lem:inversefunction}, $F$ is locally surjective at $\ga$ onto a neighbourhood $\widetilde U\ni\mathrm{scal}_{\ga}$; fix $\epsilon>0$ with $B(\mathrm{scal}_{\ga},\epsilon)\subset\widetilde U$.

We apply Theorem~\ref{lem:approximationintro} with $cf$ in the role of ``$f_1$'' and $\mathrm{scal}_{\ga}$ in the role of ``$f_2$'', using the same map $h$. Since $\bar f_1=\overline{(cf)}=c\,\bar f$, the matching hypothesis reads $\bar f_2=\overline{\mathrm{scal}}_{\ga}=c\,\bar f\circ h=\bar f_1\circ h$, which is precisely sub-condition~(b.2) when $X\cong[0,L]$ (the map $h$ fixes the boundary points) and sub-condition~(c.2) when $X\cong S^1$ (the map $h$ has degree one). In either case there is $\widetilde\Phi\in\mathrm{Diff}_G(M)$ with $\|cf\circ\widetilde\Phi-\mathrm{scal}_{\ga}\|_{L^p}<\epsilon$. As $f$ is $G$-invariant and $\widetilde\Phi$ is $G$-equivariant, $cf\circ\widetilde\Phi\in L^p_G(M)$, and hence $cf\circ\widetilde\Phi\in\widetilde U$.

In both cases above we have a smooth $G$-invariant metric $\ga$ with
$\ker A^*=\{0\}$ and $cf\circ\widetilde\Phi\in\widetilde U$. By local
surjectivity (Lemma~\ref{lem:inversefunction}) there is a $G$-invariant metric
$\widetilde\ga$ near $\ga$ with
$\mathrm{scal}_{\widetilde\ga}=F(\widetilde\ga)=cf\circ\widetilde\Phi$. Since
$cf\circ\widetilde\Phi$ is smooth (as $f$ and $\widetilde\Phi$ are), the same
lemma makes $\widetilde\ga$ smooth. The metric $\widehat\ga:=c\,(\widetilde\Phi^{-1})^*\widetilde\ga$ is smooth and $G$-invariant. Using $\mathrm{scal}_{c\ga'}=c^{-1}\mathrm{scal}_{\ga'}$ for any metric $\ga'$ and constant $c>0$, together with $\mathrm{scal}_{\Psi^*\ga'}=\mathrm{scal}_{\ga'}\circ\Psi$ for any diffeomorphism $\Psi$, this finishes the proof because
\[
 \mathrm{scal}_{\widehat\ga}
 = c^{-1}\,\mathrm{scal}_{(\widetilde\Phi^{-1})^*\widetilde\ga}
 = c^{-1}\bigl(\mathrm{scal}_{\widetilde\ga}\circ\widetilde\Phi^{-1}\bigr)
 = c^{-1}\bigl((cf\circ\widetilde\Phi)\circ\widetilde\Phi^{-1}\bigr)
 = f . \qedhere
\]
\end{proof}

\

\

\section{The Yamabe problem for \texorpdfstring{$G$}{G}-invariant metrics}
\label{sec:Yamabe}

Let $c>0$. We establish the existence of a Riemannian metric $\widetilde{\ga}$ with constant scalar curvature $c$, obtained as a smooth positive $G$-invariant solution to the following PDE derived by a suitable conformal change of $\ga$:
\begin{equation}\label{eq:PDE}
-4b_n\Delta_{\ga}u + \mathrm{scal}_{\ga}u - cu^{\gamma_n} = 0,
\end{equation}
where $\gamma_n := \dfrac{n+2}{n-2}$ and $b_n :=\dfrac{n-1}{n-2}$. We approach the existence of solutions to \eqref{eq:PDE} by finding critical points (specifically, minimizers) of the functional
\begin{equation}
J(u) = 2b_n\int_M|\nabla u|_{\ga}^2 + \frac{1}{2}\int_M\mathrm{scal}_{\ga}u^2 - \frac{c}{2^*}\int_M|u|^{2^*},
\end{equation}
defined in $W^{1,2}(M)$, where $2^* := \dfrac{2n}{n-2}$. 

We assume $\mathrm{scal}_{\ga}$ to be a continuous $G$-invariant function. This ensures that $J$ is well-defined and of class $C^1$ as a functional on $W^{1,2}_G(M)$. Finding a \emph{symmetric critical point} for $J$ requires finding $u\in W^{1,2}_G(M)$ such that $\mathrm{d} J(u)(v) = 0$ for every $v\in W^{1,2}_G(M)$. By Palais' Principle of Symmetric Criticality \cite{palais1979}, because the group acts by isometries, any critical point of the restricted functional $J|_{W^{1,2}_G(M)}$ is a critical point of the full functional $J$ on $W^{1,2}(M)$. Thus, $\mathrm{d} J(u)(v)=0$ for all $v\in W^{1,2}_G(M)$ implies the same equality holds for all test functions $v\in W^{1,2}(M)$. The following compactness result is central to the argument:

\begin{theorem}[Hebey--Vaugon; see {\cite{hebey1996sobolev}*{Thm.~5.6}}]\label{thm:largerembedings}
Let $(M,\ga)$ be a closed connected Riemannian manifold of dimension $n$ with an effective isometric action by a compact Lie group $G$. If the minimal orbit dimension $d = \min_{x\in M}\dim Gx \geq 1$, then for any $p \in [1,n)$ there exists a $p_0 > \frac{np}{n-p} =: p^*$ such that $W^{1,p}_G(M)$ embeds compactly in $L^q_G(M)$ for every $1 \leq q \leq p_0$. 
\end{theorem}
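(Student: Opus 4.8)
The plan is to prove both the continuity and the compactness of the embedding by \emph{localizing and reducing dimension}: every basic function is constant along the orbit directions, so on each trivializing chart $\varphi_i(\Omega_i)=U_i\times V_i\subset\bb R^{d_i}\times\bb R^{n-d_i}$ it descends to a function of the slice variable $v\in V_i\subset\bb R^{n-d_i}$ alone, where the \emph{classical} Rellich--Kondrachov theorem applies with the critical exponent of dimension $n-d_i$ rather than $n$. Since $\phi(m):=\tfrac{mp}{m-p}$ is strictly decreasing in $m$ for fixed $p<m$, and every orbit dimension satisfies $d_i\geq d\geq 1$, each slice exponent $p^*_i:=\tfrac{p(n-d_i)}{(n-d_i)-p}$ strictly exceeds $p^*=\tfrac{np}{n-p}$; the worst (smallest) of them is attained at the minimal orbit dimension $d$, so I would set $p_0$ to be any number in the interval $\big(p^*,\tfrac{p(n-d)}{(n-d)-p}\big)$ (and, whenever $n-d\leq p$, let $q$ be an arbitrary finite exponent, using item (i)). By construction $p_0>p^*$, which is the asserted improvement.

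First I would fix the finite cover $\{(\Omega_i,\varphi_i)\}_{i=0}^{l}$ and the basic partition of unity $\{\psi_i\}$ from the discussion preceding the statement, so that each $\psi_i$ is $G$-invariant with support in $\Omega_i$ and $\sum_i\psi_i\equiv 1$. For a basic $u$, property \eqref{item:restritor} makes $(\psi_i u)\circ\varphi_i^{-1}$ independent of the $U_i$-variable, hence identifiable with a compactly supported $w_i\in W^{1,p}(V_i)$. The routine but essential bookkeeping is the two-sided norm comparison: because the chart metric is uniformly equivalent to the Euclidean one and the orbit (fibre) directions contribute no derivatives to a basic function, one gets $\|w_i\|_{W^{1,p}(V_i)}\lesssim\|\psi_i u\|_{W^{1,p}(M)}$, while integrating the fibre-volume factor over $U_i$ yields $\|\psi_i u\|_{L^q(M)}\lesssim\|w_i\|_{L^q(V_i)}$, the implied constants depending only on the finitely many charts. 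Summing the Sobolev inequality $\|w_i\|_{L^q(V_i)}\leq C_i\|w_i\|_{W^{1,p}(V_i)}$ over $i$ then gives the bounded embedding $W^{1,p}_G(M)\hookrightarrow L^q_G(M)$ for all $q\leq p_0$.

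For compactness I would take a bounded sequence $\{u_k\}\subset W^{1,p}_G(M)$. For each fixed $i$ the reduced sequence $\{w_{i,k}\}$ is bounded in $W^{1,p}(V_i)$ with uniform compact support, so since $q\leq p_0<p^*_i$ the classical Rellich--Kondrachov theorem on the open set $V_i\subset\bb R^{n-d_i}$ furnishes an $L^q(V_i)$-convergent subsequence (in the regime $n-d_i\leq p$ this compactness holds for every finite $q$). Extracting successively over the finite index set $i=0,\dots,l$ and diagonalizing produces a single subsequence along which every $\psi_i u_k$ converges in $L^q(M)$; lifting back and using $u_k=\sum_i\psi_i u_k$ shows $u_k$ converges in $L^q_G(M)$, which is the desired compactness.

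The main obstacle is not the one-chart argument but the \emph{uniformity of the norm comparisons across the stratification}: on a neighbourhood of a singular stratum the orbit volume can degenerate, so the fibre-volume factor relating the $M$-integral to the $V_i$-integral need not be bounded away from $0$ a priori. This is exactly where I would lean on the structure theory recalled earlier---each orbit-type stratum $\Sigma_i$ carries a Riemannian submersion $\pi|_{\Sigma_i}\colon\Sigma_i\to\Sigma_i/G$ with orbits of constant dimension $d_i$, so the fibre volumes are comparable within a single trivializing chart---and on Hebey's local-to-global scheme (Theorem~9.1 in \cite{hebey1996sobolev}), which assembles the finitely many slice estimates into the global embedding. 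The hypothesis $d\geq 1$ is indispensable here: a zero-dimensional orbit would force a slice of full dimension $n$, collapsing $p^*_i$ back to $p^*$ and destroying the improvement.
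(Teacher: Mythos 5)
Your proposal is correct and takes essentially the same approach as the paper: localization by trivializing charts adapted to the orbit decomposition, reduction of basic functions to the slice factor $V_i\subset\mathbb{R}^{n-d_i}$, the classical Rellich--Kondrachov theorem in dimension $n-d_i$, and globalization via a finite basic partition of unity following Hebey's Theorem~9.1, with the monotonicity of $m\mapsto mp/(m-p)$ yielding $p_0>p^*$ from $d\geq 1$. Your explicit choice of $p_0$ below the worst slice exponent $\frac{p(n-d)}{(n-d)-p}$, the case split when $n-d\leq p$, and the attention to uniform two-sided bounds on the fibre-volume factor on the compact supports of the $\psi_i$ merely make precise the ``globalizing appropriately'' step that the paper leaves to the reader.
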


In our variational setup, $p=2$. Theorem \ref{thm:largerembedings} ensures that the embedding $W^{1,2}_G(M) \hookrightarrow L^{2^*}_G(M)$ is compact.

\begin{lemma}\label{lem:criticalpoint}
    Assume that $\mathrm{scal}_{\ga}$ is a continuous function with $\min_M\mathrm{scal}_{\ga} \geq 0$, and let $c > 0$. Given $\epsilon > 0$, consider the set
    \begin{equation}
        \Sigma := \left\{u \in W^{1,2}_G(M) : \frac{c}{2^*}\int_M |u|^{2^*} = \epsilon \right\}.
    \end{equation}
    Then, for the functional
    \begin{equation}
     J(u) = 2b_n\int_M|\nabla u|_{\ga}^2 + \frac{1}{2}\int_M\mathrm{scal}_{\ga}u^2
       - \frac{c}{2^*}\int_M |u|^{2^*},
    \end{equation}
    it holds that:
    \begin{enumerate}
        \item $\Sigma$ is non-empty and weakly closed, and $J|_{{\Sigma}}$ is weakly lower semicontinuous;
        \item $J|_{\Sigma}$ is coercive.
    \end{enumerate}
\end{lemma}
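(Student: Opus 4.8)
The plan is to exploit the fact that, once restricted to the constraint set $\mathbf{M}_G$, the most delicate (critical, a priori non-compact) term of $J$ is frozen to the constant $\epsilon$, so that $J|_{\mathbf{M}_G}$ reduces to the manifestly well-behaved functional
\[
J(u) = 2b_n\int_M|\nabla u|_\ga^2 + \frac12\int_M\scal_\ga\, u^2 - \epsilon,\qquad u\in\mathbf{M}_G,
\]
and then to run the classical direct-method machinery on this reduced form. The entire argument rests on the improved compact embedding $W^{1,2}_G(M)\hookrightarrow L^{2^*}_G(M)$ at the critical exponent $2^*=2n/(n-2)$, which is available precisely because $d\geq 1$ (Theorem \ref{thm:largerembedings}, applied with $p=2$ and the resulting $p_0>2^*$); I would isolate this as the single ingredient doing the heavy lifting.

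First, for non-emptiness I would simply rescale a constant: the function $u\equiv\lambda$ is $G$-invariant and non-negative, and choosing $\lambda=\big(2^*\epsilon/(c\,\mathrm{Vol}(M))\big)^{1/2^*}>0$ places it in $\mathbf{M}_G$. For weak closedness, suppose $u_k\rightharpoonup u$ weakly in $W^{1,2}_G(M)$ with $u_k\in\mathbf{M}_G$. The non-negativity cone is convex and strongly closed, hence weakly closed, so $u\geq 0$. The equality constraint is where compactness enters: by Theorem \ref{thm:largerembedings} the weak convergence upgrades to strong convergence $u_k\to u$ in $L^{2^*}_G(M)$, whence $\int_M u_k^{2^*}\to\int_M u^{2^*}$ and the equation $\tfrac{c}{2^*}\int_M u^{2^*}=\epsilon$ passes to the limit; thus $u\in\mathbf{M}_G$. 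For weak lower semicontinuity I would then use the reduced form above: the Dirichlet term $u\mapsto\int_M|\nabla u|_\ga^2$ is convex and strongly continuous, hence weakly lower semicontinuous, while the potential term is in fact weakly continuous, since $\scal_\ga$ is continuous (thus bounded on the compact $M$) and $u_k\to u$ strongly in $L^2$ (standard Rellich--Kondrachov suffices here, as $2<2^*$) forces $u_k^2\to u^2$ in $L^1$. Adding the constant $-\epsilon$ yields $\liminf_k J(u_k)\geq J(u)$, proving $(A)$.

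For coercivity $(B)$, the key observation is that the constraint \emph{by itself} bounds $\|u\|_{L^2}$ uniformly on $\mathbf{M}_G$: since $\mathrm{Vol}(M)<\infty$ and $2<2^*$, Hölder's inequality gives $\|u\|_{L^2}\leq \mathrm{Vol}(M)^{1/n}\|u\|_{L^{2^*}}=\mathrm{Vol}(M)^{1/n}\big(2^*\epsilon/c\big)^{1/2^*}=:C_0$, a constant independent of $u\in\mathbf{M}_G$. Discarding the potential term via the hypothesis $\scal_\ga\geq 0$, I obtain $J(u)\geq 2b_n\int_M|\nabla u|_\ga^2-\epsilon$, so that
\[
\|u\|_{W^{1,2}}^2=\int_M|\nabla u|_\ga^2+\|u\|_{L^2}^2\leq \frac{J(u)+\epsilon}{2b_n}+C_0^2,
\]
which forces $J(u)\to+\infty$ whenever $\|u\|_{W^{1,2}}\to\infty$ along $\mathbf{M}_G$; this is exactly coercivity.

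The main obstacle — and the only place the standing hypotheses are genuinely used — is preserving the equality constraint under weak limits, i.e.\ ruling out loss of mass at the critical exponent. Without symmetry this step fails, as the embedding into $L^{2^*}$ is merely continuous and concentration may strictly decrease $\int_M u_k^{2^*}$ in the limit; this is precisely the defect in Yamabe's original variational attempt. Here the positive-dimensional orbit hypothesis $d\geq 1$ restores compactness through Theorem \ref{thm:largerembedings}, after which everything else is routine. I would also note that the sign condition $\scal_\ga\geq 0$ is what renders the potential term harmless in the coercivity estimate, whereas the nonvanishing of $\scal_\ga$ plays no role here and will matter only later, when one verifies that the resulting minimizer is not identically zero.
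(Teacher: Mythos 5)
Your proof is correct and follows essentially the same route as the paper: non-emptiness via a rescaled constant, weak closedness of $\mathbf{M}_G$ via the improved compact embedding $W^{1,2}_G(M)\hookrightarrow L^{2^*}_G(M)$ from Theorem \ref{thm:largerembedings} (the paper handles the sign constraint by extracting a pointwise a.e.\ convergent subsequence, where you instead use weak closedness of the convex cone $\{u\geq 0\}$ --- both are fine), and weak lower semicontinuity from the weak lsc of the Dirichlet term plus strong $L^2$ convergence of the quadratic term. The one place you genuinely diverge is the coercivity step. The paper bounds $J(u)\geq 2b_n\int_M|\nabla u|_{\ga}^2+\frac{1}{2}\min_M\scal_{\ga}\int_M u^2-\epsilon$ and then appeals to ``Poincar\'e's inequality'' to conclude; this appeal is delicate, since on a closed manifold no inequality $\|u\|_{L^2}\lesssim\|\nabla u\|_{L^2}$ holds without a mean-zero normalization, and $\min_M\scal_{\ga}$ may vanish, so the displayed lower bound alone does not control the full $W^{1,2}$-norm. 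Your replacement --- using H\"older together with the constraint $\frac{c}{2^*}\int_M u^{2^*}=\epsilon$ to bound $\|u\|_{L^2}$ uniformly on $\mathbf{M}_G$, so that $J(u)\geq 2b_n\int_M|\nabla u|_{\ga}^2-\epsilon$ already forces $J(u)\to\infty$ as $\|u\|_{W^{1,2}}\to\infty$ along $\mathbf{M}_G$ --- is the cleaner and fully rigorous way to close this step, and it makes explicit that coercivity is a property of $J|_{\mathbf{M}_G}$ rather than of $J$ on all of $W^{1,2}_G(M)$. Your closing remark also correctly locates the role of each hypothesis: $d\geq 1$ feeds the compactness, $\scal_{\ga}\geq 0$ the coercivity, and $\scal_{\ga}\not\equiv 0$ is indeed only used afterwards, when the constrained minimizer is upgraded to a positive solution.
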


\begin{proof}
\begin{enumerate}
        \item Evaluating the integral on an appropriate constant shows that $\Sigma$ is non-empty. Let $\{u_m\}\subset\Sigma$ with $u_m\rightharpoonup u$ in $W^{1,2}_G(M)$. As this
space is a closed subspace of the reflexive $W^{1,2}(M)$, it is weakly closed, so
$u\in W^{1,2}_G(M)$. By Theorem~\ref{thm:largerembedings} the embedding
$W^{1,2}_G(M)\hookrightarrow L^{2^*}_G(M)$ is compact, hence $u_m\to u$ strongly
in $L^{2^*}_G(M)$ and $\frac{c}{2^*}\int_M|u_m|^{2^*}\to\frac{c}{2^*}\int_M|u|^{2^*}$,
giving $u\in\Sigma$. Thus $\Sigma$ is weakly closed.

        To establish weak lower semicontinuity, we observe that weak convergence in $W^{1,2}_G(M)$ implies:
        $$\liminf_{m\to\infty}\int_M|\nabla u_m|^2_{\ga} \geq \int_M|\nabla u|^2_{\ga}.$$
        Using the strong convergence of $\{u_m\}$ in $L^2_G(M)$ and the continuity of $\mathrm{scal}_{\ga}$, we have:
        $$\lim_{m\to\infty} \int_M \mathrm{scal}_{\ga}u_m^2 = \int_M \mathrm{scal}_{\ga}u^2.$$
        Combining this with the strong convergence of the $L^{2^*}$ term established above, we conclude:
        $$\liminf_{m\to\infty}J(u_m) \geq 2b_n\int_M|\nabla u|^2_{\ga} +\frac{1}{2}\int_M\mathrm{scal}_{\ga}u^2-\frac{c}{2^*}\int_M|u|^{2^*} = J(u).$$
        
    \item For coercivity, observe that on the constrained set $\Sigma$, we have $\int_M |u|^{2^*} = \epsilon 2^* / c$. By the compactness of $M$, Hölder's inequality gives:
        $$\|u\|_{L^2(M)}^2 \leq \mathrm{Vol}(M)^{\frac{2}{n}} \|u\|_{L^{2^*}(M)}^2.$$
        Consequently, the $L^2$-norm of $u$ is uniformly bounded on $\Sigma$.
        
        Substituting the bounds into $J$, we obtain:
        \begin{align}\label{eq:inequality1}
            J(u) &\geq 2b_n\int_M|\nabla u|_{\ga}^2 + \frac{\min_M\mathrm{scal}_{\ga}}{2}\int_Mu^2 - \epsilon.
        \end{align}
        Because $\min_M \mathrm{scal}_{\ga} \geq 0$, 
        $$J(u) \geq 2b_n\int_M|\nabla u|_{\ga}^2 - \epsilon.$$
        As $\|u\|_{W_G^{1,2}(M)} \to \infty$ within $\Sigma$, we have $\|\nabla u\|_2^2 \to \infty$, and thus $J(u) \to \infty$. This proves that $J|_{\Sigma}$ is coercive. \qedhere
    \end{enumerate}
\end{proof}

\begin{proposition}\label{thm:invariantkazdan}
Let $\ga$ be a smooth $G$-invariant metric with $\mathrm{scal}_{\ga} \ge 0$ and $\mathrm{scal}_{\ga} \not\equiv 0$. Then every constant $c > 0$ is realized as the constant scalar curvature of a smooth $G$-invariant metric on $M$.
\end{proposition}
\begin{proof}
The strategy is to minimize the functional $J$ subject to the constraint defined by $\Sigma$. The resulting Lagrange multiplier yields a solution with constant scalar curvature $c'>0$, which is subsequently rescaled to $c$ via a homothety. By Lemma~\ref{lem:criticalpoint}, $J|_{\Sigma}$ is coercive and weakly lower semicontinuous on the weakly closed set $\Sigma$, ensuring it attains its infimum at some $u\in\Sigma$. Because $u \in W^{1,2}_G(M)$ implies $|u| \in W^{1,2}_G(M)$ with $|\nabla|u|| = |\nabla u|$ almost everywhere, replacing $u$ by $|u|$ leaves $\int_M|\nabla u|_{\ga}^2$, $\int_M\mathrm{scal}_{\ga} u^2$, and $\int_M|u|^{2^*}$ unchanged. Thus, $J(|u|) = J(u)$ and $|u| \in \Sigma$. We may therefore assume $u \ge 0$.

The functional $I(u) = \frac{c}{2^*}\int_M|u|^{2^*}$ is of class $C^1$ on $W^{1,2}_G(M)$ with derivative $\mathrm{d}I(u)(v) = c\int_M|u|^{2^*-2}u\,v$, yielding $\mathrm{d}I(u)(u) = 2^*\epsilon > 0$. Consequently, $\epsilon$ is a regular value, and $\Sigma = I^{-1}(\epsilon)$ is a smooth codimension-one submanifold of $W^{1,2}_G(M)$. By the Lagrange Multiplier Theorem, there exists $\lambda \in \mathbb{R}$ such that $\mathrm{d}J(u)(v) = \lambda\,\mathrm{d}I(u)(v)$ for all $v \in W^{1,2}_G(M)$. By the Principle of Symmetric Criticality, this equality extends to all $v \in W^{1,2}(M)$. Because $u \ge 0$, we have $|u|^{2^*-2}u = u^{\gamma_n}$, expanding the equation to:
$$4b_n\int_M \langle \nabla u,\nabla v\rangle_{\ga} + \int_M\mathrm{scal}_{\ga}uv = (1+\lambda)\,c\int_M u^{\gamma_n}v.$$
This demonstrates that $u$ weakly solves Equation \eqref{eq:PDE} with the prescribed constant replaced by $c' = (1+\lambda)c$.

Testing the equation against $v=u$ yields:
$$4b_n\int_M |\nabla u|_{\ga}^2 + \int_M\mathrm{scal}_{\ga}u^2 = (1+\lambda)\,c\int_M u^{2^*}.$$
The left-hand side is strictly positive: if $\nabla u \equiv 0$, then $u$ must be a positive constant (since $u \ge 0$ and $u \not\equiv 0$), making the second term strictly positive because $\mathrm{scal}_{\ga} \ge 0$ and $\mathrm{scal}_{\ga} \not\equiv 0$. Otherwise, $\int_M|\nabla u|_{\ga}^2 > 0$. Because $\int_M u^{2^*} > 0$ and $c > 0$, it follows that $1+\lambda > 0$, which ensures $c' = (1+\lambda)c > 0$.

We must verify that $u$ is strictly positive and smooth. By Theorem~\ref{thm:largerembedings}, $W^{1,2}_G(M)$ embeds compactly into $L^{p_0}_G(M)$ for some $p_0 > 2^*$, placing $u \in L^{p_0}(M)$. This implies $c'u^{\gamma_n} \in L^{p_0/\gamma_n}(M)$, where $p_0/\gamma_n > 2^*/\gamma_n = \frac{2n}{n+2} > 1$. Because $\mathrm{scal}_{\ga}u \in L^{p_0}(M) \subset L^{p_0/\gamma_n}(M)$, elliptic regularity dictates that $u \in W^{2,\,p_0/\gamma_n}(M)$. One iteration of elliptic regularity followed by Sobolev embedding maps a function $u \in L^{p}$ to $u \in L^{p^{\sharp}}$, where $\frac{1}{p^{\sharp}} = \frac{\gamma_n}{p} - \frac{2}{n}$. Defining $a_k = 1/p_k$, this produces the recursion $a_{k+1} - \frac{1}{2^*} = \gamma_n\bigl(a_k - \frac{1}{2^*}\bigr)$. Given $a_0 = 1/p_0 < 1/2^*$ and $\gamma_n > 1$, the sequence $a_k$ diverges to $-\infty$. After finitely many iterations, we conclude $u \in L^\infty(M)$.

Because $u \in L^\infty(M)$ and the coefficients are bounded, $\Delta_{\ga}u \in L^\infty(M)$, which implies $u \in W^{2,p}(M)$ for all $p < \infty$, and particularly $u \in C^{1,\alpha}_G(M)$. Being non-negative and non-trivial, the continuous function $u$ satisfies:
$$-4b_n\Delta_{\ga}u + \bigl(\mathrm{scal}_{\ga} - c'\,u^{\gamma_n-1}\bigr)u = 0.$$
The zeroth-order coefficient $x \mapsto \mathrm{scal}_{\ga}(x) - c'\,u(x)^{\gamma_n-1}$ belongs to $L^\infty(M)$. The strong maximum principle \cite{aubinbook}*{Proposition 3.75, p.98} requires $u > 0$ strictly everywhere on $M$. Because $M$ is compact, $u$ is bounded below by a strictly positive constant. Consequently, the map $t \mapsto t^{\gamma_n}$ is smooth on the strictly positive interval $[\min_M u, \max_M u] \subset (0,\infty)$, ensuring that $c'u^{\gamma_n}$ shares the exact regularity of $u$. Since $\ga$ and $\mathrm{scal}_{\ga}$ are smooth, elliptic bootstrapping iteratively guarantees $u \in C^\infty_G(M)$.

Finally, the conformal modification $\widetilde{\ga} = u^{\frac{4}{n-2}}\ga$ defines a $G$-invariant metric exhibiting constant scalar curvature $c' > 0$. Applying the constant scaling $k = c/c'$, the metric $\widehat{\ga} = k\widetilde{\ga}$ achieves the prescribed constant scalar curvature $c$.
\end{proof}

\begin{corollary}\label{cor:invariantkazdhan}
 Assume that $M$ carries no $G$-invariant metric of strictly positive scalar curvature. If $\ga$ is a smooth $G$-invariant metric with $\mathrm{scal}_{\ga}\geq 0$, then $\mathrm{scal}_{\ga} \equiv 0$.
\end{corollary}
\begin{proof}
If $\mathrm{scal}_{\ga}\not\equiv 0$, Proposition~\ref{thm:invariantkazdan} would produce a $G$-invariant metric of positive constant scalar curvature, contradicting the hypothesis. Hence $\mathrm{scal}_{\ga}\equiv 0$.
\end{proof}

\begin{proposition}\label{prop:auxiliandonofim}
Assume that $M$ carries no $G$-invariant metric of strictly positive scalar curvature. If there exists a smooth $G$-invariant metric $\ga$ with $\mathrm{scal}_{\ga} \geq 0$, then $\ga$ is Ricci-flat.
\end{proposition}
\begin{proof}
Let $\ga$ be a smooth $G$-invariant metric on $M$ with $\mathrm{scal}_{\ga} \geq 0$. Because $M$ does not carry a $G$-invariant metric of strictly positive scalar curvature, Corollary \ref{cor:invariantkazdhan} establishes that $\mathrm{scal}_{\ga} \equiv 0$ on $M$. Assume, for the sake of contradiction, that $\ga$ is not Ricci-flat. Since $\mathrm{scal}_{\ga} \equiv 0$ and $\mathrm{Ric}(\ga) \neq 0$, Lemma \ref{lem:marsden}(a) implies $\ker A_{\ga}^* = \{0\}$.

Because $\ga$ is smooth, Lemma \ref{lem:inversefunction} ensures that the scalar curvature operator $F$ is locally surjective at $\ga$: there exists an $L^p_G(M)$-neighborhood $\widetilde U$ of $\mathrm{scal}_{\ga}\equiv 0$ wherein every function is realized as the scalar curvature of a smooth $G$-invariant metric near $\ga$. For any constant $K > 0$, the norm $\|K\|_{L^p(M)} = K\,\mathrm{Vol}(M)^{1/p}$ tends to $0$ as $K \to 0$. Therefore, for $K$ sufficiently small, the positive constant function $K$ belongs to $\widetilde U$. Thus, there exists a smooth $G$-invariant metric $\widetilde\ga$ with $\mathrm{scal}_{\widetilde\ga} \equiv K > 0$, contradicting the hypothesis. 
\end{proof}

\begin{lemma}\label{lem:negativetotalscal}
Suppose the cohomogeneity satisfies $k:= \dim(M/G)\ge 2$. Then $M$ admits a smooth
$G$-invariant metric $\ga_1$ with $\displaystyle\int_M \mathrm{scal}_{\ga_1}\,\mathrm{d}\mu_{\ga_1} < 0$.
\end{lemma}
\begin{proof}
Fix a principal orbit $F = G\cdot p \cong G/H$ and set $m:=\dim F=n-k\ge 1$.
The Slice Theorem \cite{alexandrino2015lie}*{Theorem 3.57} gives a $G$-equivariant
diffeomorphism from a $G$-invariant tube $U$ of $F$ onto $F\times D^k$, with $G$
acting by left translations on $F$ and trivially on $D^k$. Fix a $G$-invariant metric $g_F$ on $F$. Because $F$ is a homogeneous space, $\ga_F$ has constant scalar curvature $\kappa$. Fix a smooth $G$-invariant metric $\ga_0$ on
$M$ that restricts to the product $h_0\oplus g_F$ on $U$, where $h_0$ is a flat metric on
$D^k$.

For a smooth metric $h$ on $D^k$ and a smooth $\phi\colon D^k\to(0,\infty)$, both equal to
$h_0$, resp.\ $\equiv 1$, near $\partial D^k$, set $\ga_1:=h\oplus\phi^2 g_F$ on $U$ and
$\ga_1:=\ga_0$ on $M\setminus U$. By the
warped-product formula \cite{besse1987einstein}*{Proposition 9.106},
\[
\mathrm{scal}_{\ga_1}=\mathrm{scal}_h+\kappa\,\phi^{-2}-2m\frac{\Delta_h\phi}{\phi}
-m(m-1)\frac{|\nabla\phi|_h^2}{\phi^2}\qquad\text{on }U,
\]
and $\mathrm{d}\mu_{\ga_1}=\phi^m\,\mathrm{d}\mu_h\,\mathrm{d}\mu_{g_F}$. Because $\phi$ is constant near
$\partial D^k$, integration by parts yields no boundary
contribution, and
\begin{equation}\label{eq:totalscalformula}
\int_U \mathrm{scal}_{\ga_1}\,\mathrm{d}\mu_{\ga_1}
=\mathrm{Vol}(F,g_F)\Big[\underbrace{\int_{D^k}\!\mathrm{scal}_h\,\phi^m\,\mathrm{d}\mu_h}_{(I)}
+\underbrace{\kappa\!\int_{D^k}\!\phi^{m-2}\,\mathrm{d}\mu_h}_{(II)}
+\underbrace{m(m-1)\!\int_{D^k}\!|\nabla\phi|_h^2\,\phi^{m-2}\,\mathrm{d}\mu_h}_{(III)}\Big].
\end{equation}
Because $\ga_1=\ga_0$ outside $U$,
\begin{equation}\label{eq:globalsplit}
\int_M \mathrm{scal}_{\ga_1}\,\mathrm{d}\mu_{\ga_1}
=\int_U \mathrm{scal}_{\ga_1}\,\mathrm{d}\mu_{\ga_1}+C_0,\qquad
C_0:=\int_{M\setminus U}\mathrm{scal}_{\ga_0}\,\mathrm{d}\mu_{\ga_0},
\end{equation}
where $C_0$ is a constant independent of $(h,\phi)$. It therefore suffices to make $(I)$
diverge to $-\infty$ while $(II)$ and $(III)$ remain bounded.

\emph{Case $k\ge 3$.} Take $\phi\equiv 1$, so $(III)=0$ and
$(I)+(II)=\int_{D^k}\mathrm{scal}_h\,\mathrm{d}\mu_h+\kappa\,\mathrm{Vol}(D^k,h)$. It therefore
suffices to produce, for each $\Lambda>0$, a metric $h$ on $D^k$ that equals $h_0$ near
$\partial D^k$, has bounded volume $\mathrm{Vol}(D^k,h)\le \mathrm{Vol}(D^k,h_0)+1$, and satisfies
$\int_{D^k}\mathrm{scal}_h\,\mathrm{d}\mu_h\le-\Lambda$. For $k\ge 3$ such metrics exist by Lohkamp's scalar-curvature flexibility theorem
\cite{Lohkamp1999}*{Theorem 1}. Applied on $\operatorname{int}(D^k)$ with reference
metric $h_0$ (so $\mathrm{scal}_{h_0}\equiv0$), to a smooth $f\le0$ with
$f\equiv-2\Lambda/\mathrm{Vol}(D^k,h_0)$ on a region $V\Subset\operatorname{int}(D^k)$ of
$h_0$-volume $\ge\tfrac23\mathrm{Vol}(D^k,h_0)$, $f<0$ on an open $U\Supset V$ with
$U\Subset\operatorname{int}(D^k)$, and $f\equiv0$ off $U$, it produces, for each $\varepsilon>0$,
a metric $h$ with $h\equiv h_0$ near $\partial D^k$, $\mathrm{scal}_h\le f\le0$ on $D^k$, and
$\|h-h_0\|_{C^0}<\varepsilon$. For $\varepsilon$ small, $\mathrm{Vol}(D^k,h)\le\mathrm{Vol}(D^k,h_0)+1$
and $\mathrm{Vol}(V,h)\ge\tfrac12\mathrm{Vol}(D^k,h_0)$, whence
$\int_{D^k}\mathrm{scal}_h\,\mathrm{d}\mu_h\le\int_V\mathrm{scal}_h\,\mathrm{d}\mu_h\le-\Lambda$.

Consequently, $(I)+(II)\le-\Lambda+|\kappa|\,(\mathrm{Vol}(D^k,h_0)+1)$, and \eqref{eq:globalsplit}
gives $\int_M\mathrm{scal}_{\ga_1}\,\mathrm{d}\mu_{\ga_1}<0$ for $\Lambda$ large.

\emph{Case $k=2$.} Here, Gauss-Bonnet theorem implies that $\int_{D^2}\mathrm{scal}_h\,\mathrm{d}\mu_h=0$ for every $h$ equal to $h_0$
near $\partial D^2$. Write $h=e^{2u}h_0$ with $u$ smooth and \emph{compactly supported} in the interior of
$D^2$, so that automatically $h=h_0$ near $\partial D^2$. In two dimensions
\[
\mathrm{scal}_h\,\mathrm{d}\mu_h=-2\,\Delta_0 u\,\mathrm{d}\mu_0,\qquad
|\nabla\phi|_h^2\,\mathrm{d}\mu_h=|\nabla\phi|_{h_0}^2\,\mathrm{d}\mu_0.
\]
Using that $u$ and $\phi-1$ vanish near
$\partial D^2$, we get
\[
(I)=-2\!\int_{D^2}\!\phi^m\,\Delta_0 u\,\mathrm{d}\mu_0
   =-2\!\int_{D^2}\! u\,\Delta_0(\phi^m)\,\mathrm{d}\mu_0,\qquad
(III)=m(m-1)\!\int_{D^2}\!|\nabla\phi|_{h_0}^2\,\phi^{m-2}\,\mathrm{d}\mu_0,
\]
In particular, $(III)$ depends only on $\phi$ (not on $u$).

Choose once and for all a smooth radial function $\phi\colon D^2\to[\phi_*,1]$ where
\begin{itemize}
\item $\phi$ is non-constant,
    \item $0<\phi_*<1$,
    \item $\phi$ is equal to  $1$ near $\partial D^2$.
\end{itemize}
Then, $\phi^m$ is non-constant and
$\int_{D^2}\Delta_0(\phi^m)\,\mathrm{d}\mu_0=0$, so $\Delta_0(\phi^m)<0$ on a nonempty open set
$W\subset D^2$. Fix a smooth non-negative and non-identically zero function $w$ supported in $W$. Set
\[
u=u_\lambda:=-\lambda\,w,\qquad \lambda>0 .
\]
With this choice:
\begin{itemize}
\item $(I)=-2\!\int u_\lambda\,\Delta_0(\phi^m)\,\mathrm{d}\mu_0
        =2\lambda\!\int_W w\,\Delta_0(\phi^m)\,\mathrm{d}\mu_0=-2\lambda\,\beta$, where
        $\beta:=\int_W w\,|\Delta_0(\phi^m)|\,\mathrm{d}\mu_0>0$. Thus $(I)\to-\infty$ as
        $\lambda\to\infty$.
\item Since $u_\lambda\le 0$ we have $e^{2u_\lambda}\le 1$, so
        $|(II)|=\Big|\kappa\!\int\phi^{m-2}e^{2u_\lambda}\,\mathrm{d}\mu_0\Big|
        \le|\kappa|\,\phi_*^{-|m-2|}\,\mathrm{Vol}(D^2,h_0)=:B_2$,
        a bound independent of $\lambda$.
\item $(III)=m(m-1)\!\int|\nabla\phi|_{h_0}^2\phi^{m-2}\,\mathrm{d}\mu_0=:B_3$ is independent of
        $\lambda$.
\end{itemize}
Therefore, by \eqref{eq:totalscalformula} and \eqref{eq:globalsplit},
\[
\int_M\mathrm{scal}_{\ga_1}\,\mathrm{d}\mu_{\ga_1}
\le \mathrm{Vol}(F,g_F)\big(-2\lambda\beta+B_2+B_3\big)+C_0
\xrightarrow[\ \lambda\to\infty\ ]{}-\infty. \qedhere
\]
\end{proof}

\begin{theorem}\label{thm:negativeexistence}
Suppose $M$ admits a smooth $G$-invariant metric $\ga_1$ such that $\int_M\mathrm{scal}_{\ga_1}\,\mathrm{d}\mu_{\ga_1}<0$. Then there exists a smooth $G$-invariant metric $\widehat{\ga}$ on $M$
such that $\mathrm{scal}_{\widehat{\ga}} \equiv -1$.
\end{theorem}
\begin{proof}
For the metric given in the hypothesis, we have
$\int_M\mathrm{scal}_{\ga_1}\,\mathrm{d}\mu_{\ga_1}<0$. We minimize the $G$-invariant Yamabe functional
$Q_{\ga_1} : W^{1,2}_G(M) \setminus \{0\} \to \mathbb{R}$,
\[
Q_{\ga_1}(u) = \frac{\int_M \bigl( 4b_n |\nabla u|_{\ga_1}^2 + \mathrm{scal}_{\ga_1} u^2 \bigr) \mathrm{d}\mu_{\ga_1}}{\left( \int_M |u|^{2^*} \mathrm{d}\mu_{\ga_1} \right)^{2/2^*}},
\qquad \nu := \inf_{u\neq 0} Q_{\ga_1}(u).
\]
Testing on $u\equiv 1$ gives
\[
\nu \le \mathrm{Vol}(M, \ga_1)^{-2/2^*} \int_M \mathrm{scal}_{\ga_1}\,\mathrm{d}\mu_{\ga_1} < 0.
\]
Let $\{u_m\} \subset W^{1,2}_G(M)$ be a minimizing sequence with $\|u_m\|_{L^{2^*}} = 1$ and
$Q_{\ga_1}(u_m) \to \nu$. Because $\nu < 0$, for $m$ large
\[
4b_n \!\int_M\! |\nabla u_m|_{\ga_1}^2 \mathrm{d}\mu_{\ga_1} + \!\int_M\! \mathrm{scal}_{\ga_1} u_m^2 \,\mathrm{d}\mu_{\ga_1} < 0
\implies 4b_n \|\nabla u_m\|_{L^2}^2 \le \max_M |\mathrm{scal}_{\ga_1}|\, \|u_m\|_{L^2}^2.
\]
By Hölder's inequality, $\|u_m\|_{L^2}^2 \le \mathrm{Vol}(M, \ga_1)^{2/n}\|u_m\|_{L^{2^*}}^2 = \mathrm{Vol}(M, \ga_1)^{2/n}$,
so $\{u_m\}$ is bounded in $W^{1,2}_G(M)$. Passing to a subsequence, $u_m \rightharpoonup \widehat{u}$
weakly in the reflexive space $W^{1,2}_G(M)$. Theorem~\ref{thm:largerembedings} implies that $u_m \to \widehat{u}$ strongly in
$L^{2^*}_G(M)$ and $\|\widehat{u}\|_{L^{2^*}} = 1$. Strong $L^{2^*}$ convergence gives strong $L^2$
convergence and, along a further subsequence, pointwise a.e.\ convergence. Replacing $\widehat u$ by
$|\widehat u|$ we may assume $\widehat{u} \ge 0$.

By weak lower semicontinuity of $\|\nabla\,\cdot\,\|_{L^2}$ and strong $L^2_G(M)$ convergence, the
numerator of $Q_{\ga_1}$ is weakly lower semicontinuous, so
$Q_{\ga_1}(\widehat{u}) \le \liminf_{m} Q_{\ga_1}(u_m) = \nu$. Thus, $\widehat{u}$ minimizes
$Q_{\ga_1}$. By Palais' Principle of Symmetric Criticality, $\widehat{u}$ is a critical point on the
full space $W^{1,2}(M)$, hence a weak solution of
\[
-4b_n \Delta_{\ga_1} \widehat{u} + \mathrm{scal}_{\ga_1} \widehat{u} = \nu\, \widehat{u}^{\gamma_n}.
\]
Since $\widehat{u} \ge 0$, $\|\widehat{u}\|_{L^{2^*}} = 1$ and $\nu < 0$, the bootstrap of Proposition~\ref{thm:invariantkazdan} gives $\widehat u\in C^{1,\alpha}_G(M)$. The strong maximum principle then yields $\widehat u>0$ on $M$, and since $\widehat u>0$,
further elliptic bootstrapping gives $\widehat u\in C^\infty_G(M)$. The conformal metric $\ga_2 = \widehat{u}^{4/(n-2)} \ga_1$
is a smooth $G$-invariant metric, and by \eqref{eq:PDE} we have $\mathrm{scal}_{\ga_2} \equiv \nu < 0$.
Setting $\widehat{\ga} = (-\nu)\,\ga_2$ gives
\[
\mathrm{scal}_{\widehat{\ga}} = (-\nu)^{-1}\,\mathrm{scal}_{\ga_2} = (-\nu)^{-1}\nu = -1. \qedhere
\]
\end{proof}

\ 

Lemma \ref{lem:dichotomy} provides a characterization for totally $G$-positive pairs $(M, G)$. To establish the corresponding proof, we must recall established geometric properties of manifolds equipped with cohomogeneity one actions. These properties are synthesized in Lemma \ref{lem:smooth}. 

Following \cites{Mostert, Bredon, alexandrino2015lie, GZ, Ziller}, when the action is of cohomogeneity one, the orbit space is homeomorphic to either $M/G \cong S^1$ (in which case all orbits are principal) or a closed interval $[0, L]$. In the interval case, there are two non-principal orbits $G/K_-$ and $G/K_+$ over the endpoints, encoded by a group diagram $H \subset K_\pm \subset G$. Writing $m := \dim(G/H) = n-1$ for the dimension of the principal orbits, we have $K_\pm/H \cong S^{\ell_\pm}$ with $\ell_\pm := \dim(K_\pm/H) \ge 0$ and $\dim(G/K_\pm) = m - \ell_\pm$. A non-principal orbit $G/K$ is defined as \emph{singular} if $\dim(G/K) < m$, which is equivalent to $\ell := \dim(K/H) \ge 1$. It is defined as \emph{exceptional} if $\dim(G/K) = m$, which is equivalent to $\ell = 0$; that is, the identity connected components for $K$ and $H$ coincide and $K/H \cong S^0$. 

\begin{definition}\label{def:isotropy-irreducible}
For the remainder of this section, we denote a principal orbit by $P = G/H$, where $H$ is a principal isotropy group, and write $\mathfrak{m} := T_{eH}P \cong \mathfrak{g}/\mathfrak{h}$ for the isotropy module, equipped with the linear isotropy representation of $H$. We say that $(M, G)$ has an \emph{isotropy irreducible principal orbit} if, for a principal orbit $P$, the representation of $H$ on $\mathfrak{m}$ is irreducible over $\mathbb{R}$. 
\end{definition}

\begin{lemma}\label{lem:smooth}
Assume $(M,G)$ is of cohomogeneity one with an isotropy irreducible principal orbit $P=G/H$ and contains no zero-dimensional orbits. Then $M/G \cong S^1$ or $M/G \cong [0,L]$, and the following properties hold:
\begin{itemize}
  \item[$(\mathrm{i})$] if $M/G \cong [0,L]$, both endpoints correspond to exceptional orbits;
  \item[$(\mathrm{ii})$] the space of $G$-invariant symmetric $2$-tensors on $P$ is one-dimensional. Up to scaling, $P$ admits a unique $G$-invariant Riemannian metric $\ga_P$;
  \item[$(\mathrm{iii})$] every $G$-invariant metric on the principal stratum takes the form $\ga = \mathrm{d}r^2 + \phi(r)^2\,\ga_P$, where $\phi$ is a smooth positive function and $r$ is an arc-length coordinate on $M/G$;
  \item[$(\mathrm{iv})$] the function $\phi$ is $L$-periodic if $M/G \cong S^1$. If $M/G \cong [0, L]$, $\phi$ extends to a smooth even function across each endpoint, satisfying $\phi'(0) = \phi'(L) = 0$, and the exceptional orbits are totally geodesic.
\end{itemize}
\end{lemma}
\begin{proof}
\begin{itemize}
    \item[(i)] By standard theory \cites{Mostert,Bredon,alexandrino2015lie}, the orbit space $M/G$ is a compact connected $1$-manifold, therefore homeomorphic to $S^1$ or $[0,L]$. If $M/G \cong S^1$, all orbits are principal. If $M/G \cong [0,L]$, there are two non-principal orbits $G/K_-$ and $G/K_+$. By the Slice Theorem \cite{alexandrino2015lie}*{Theorem 3.57}, the slice representation of $K_\pm$ acts transitively on the unit normal sphere, implying $K_\pm/H \cong S^{\ell_\pm}$ where $\ell_\pm = \dim(K_\pm/H)$ and $\dim(G/K_\pm) = m - \ell_\pm$.

    Assume that $G/K_-$ is singular, meaning $\ell_- \ge 1$. Choose $\operatorname{Ad}(H)$-invariant complements (orthogonal with respect to an $\operatorname{Ad}(K_-)$-invariant inner product on $\mathfrak{g}$), such that
    \[
    \mathfrak{g} = \mathfrak{k}_- \oplus \mathfrak{n}_-,\qquad \mathfrak{k}_- = \mathfrak{h} \oplus \mathfrak{p}_-.
    \]
    As $\operatorname{Ad}(H)$-modules, we obtain the decomposition
    \[
    \mathfrak{m} \cong \mathfrak{g}/\mathfrak{h} = \mathfrak{p}_- \oplus \mathfrak{n}_-,\qquad \mathfrak{p}_- \cong T_{eH}(K_-/H),\quad \mathfrak{n}_- \cong T_{eK_-}(G/K_-).
    \]
    Here, $\dim\mathfrak{p}_- = \ell_- \ge 1$. Additionally, $\dim\mathfrak{n}_- = \dim(G/K_-) = m - \ell_- \ge 1$, where the strict inequality holds because the action has no zero-dimensional orbits. Consequently, $\mathfrak{m}$ admits a nontrivial $\operatorname{Ad}(H)$-invariant decomposition into two positive-dimensional submodules, contradicting the hypothesis of isotropy irreducibility. Therefore, $\ell_\pm = 0$ and both non-principal orbits are exceptional.

    \item[(ii)] Fix an $\operatorname{Ad}(H)$-invariant inner product $\langle\cdot,\cdot\rangle$ on $\mathfrak{m}$. Every invariant symmetric bilinear form can be expressed as $b(x,y) = \langle Ax,y\rangle$ for a unique endomorphism $A \in \operatorname{End}_H(\mathfrak{m})$ satisfying $A^\ast = A$. Being self-adjoint with respect to a positive-definite inner product, $A$ is diagonalizable over $\mathbb{R}$. Each eigenspace $\ker(A - \lambda\,\mathrm{Id})$ is $H$-invariant because $A$ is $H$-equivariant. By irreducibility, each eigenspace is either $\{0\}$ or the entirety of $\mathfrak{m}$. Thus $A = \lambda\,\mathrm{Id}$ and $b = \lambda\langle\cdot,\cdot\rangle$; demonstrating that the space of invariant symmetric $2$-tensors is one-dimensional.

    \item[(iii)] Because $G$ acts by isometries, the projection $\pi \colon (M^{\mathrm{princ}}, \ga) \to (M^{\mathrm{princ}}/G, \bar{\ga})$ is a Riemannian submersion onto the $1$-dimensional quotient manifold, with the orbits serving as fibers \cite{alexandrino2015lie}*{\S6.3}. Let $r$ denote an arc-length coordinate on $(M^{\mathrm{princ}}/G, \bar{\ga})$ and let $r$ also denote its pullback to $M^{\mathrm{princ}}$. The function $r$ is smooth, $G$-invariant, and has the orbits as level sets, with $|\nabla r|_{\ga} = 1$. For any vector field $X$ tangent to an orbit, $\ga(\nabla r, X) = \mathrm{d}r(X) = X(r) = 0$, showing that $\nabla r$ is normal to the orbits. The $\ga$-dual of $\nabla r$ is $\mathrm{d}r$, and since $\ga(\nabla r, \nabla r) = 1$, the metric splits orthogonally as $\ga = \mathrm{d}r^2 + \ga_r$, where $\ga_r := \ga|_{T(\text{orbit})}$. Because $(M,G)$ has an isotropy irreducible principal orbit, Item (ii) dictates that $\ga_r = \phi(r)^2 \ga_P$. The function $\phi$ is strictly positive and smooth because $\ga_r$ is a smooth positive-definite metric tensor.

    \item[(iv)] Consider the endpoint $r=0$, where the exceptional orbit is $Q = G/K$ with $K/H \cong \mathbb{Z}_2$. Because $H$ has index $2$ in $K$, it is normal, implying $K \subset N_G(H)$. Choose an element $w \in K$ representing the nontrivial coset, such that $w^2 \in H$ but $w \notin H$. A $G$-invariant tubular neighborhood of $Q$ is given by $G \times_K V$ with slice $V = \mathbb{R}$, on which $K$ acts via the homomorphism $K \to K/H \cong \mathbb{Z}_2 \hookrightarrow \mathrm{O}(1) = \{\pm1\}$. Because the orbit is exceptional, this map is surjective; thus, $w$ acts on $V$ by $v \mapsto -v$ and $H$ acts trivially. The associated two-fold cover
    \[
    \widehat{N} := G \times_H V = (G/H) \times (-\varepsilon, \varepsilon) \longrightarrow G \times_K V
    \]
    is an unbranched $G$-equivariant double cover (the two preimages of a point in $Q$ are distinct since $w \notin H$). The deck involution is given by
    \[
    \sigma(xH, s) = (\tau(xH), -s),\qquad \tau(xH) := xw^{-1}H,\qquad \tau^2 = \mathrm{id}\ \ (\text{since } w^2 \in H).
    \]
    Consider the pullback of $\ga$ to $\widehat{N}$. On this cover, the orbit over $Q$ is the principal orbit $(G/H) \times \{0\}$. By Item (iii), the pulled-back metric takes the form $\widehat{\ga} = \mathrm{d}s^2 + \widehat{\phi}(s)^2 \ga_P$ for $s \in (-\varepsilon, \varepsilon)$, where $\widehat{\phi}$ is smooth, positive, and satisfies $\widehat{\phi}(s) = \phi(s)$ for $s > 0$.

    Because $\widehat{\ga}$ descends to $M$, it must be $\sigma$-invariant. The map $\tau$ represents the action of $w \in N_G(H)$ on $G/H$, so $\tau^\ast \ga_P$ is another $G$-invariant metric. By Item (ii), $\tau^\ast \ga_P = c\,\ga_P$ for some constant $c > 0$. The condition $\tau^2 = \mathrm{id}$ implies $c^2 = 1$, thus $c = 1$. Consequently,
    \[
    \sigma^\ast\widehat{\ga} = \mathrm{d}s^2 + \widehat{\phi}(-s)^2\,\tau^\ast \ga_P = \mathrm{d}s^2 + \widehat{\phi}(-s)^2 \ga_P.
    \]
    The invariance $\sigma^\ast\widehat{\ga} = \widehat{\ga}$ necessitates $\widehat{\phi}(-s)^2 = \widehat{\phi}(s)^2$. Since $\widehat{\phi}$ is strictly positive, $\widehat{\phi}(-s) = \widehat{\phi}(s)$, demonstrating that $\widehat{\phi}$ is an even function. Being smooth and even, $\widehat{\phi}$ satisfies $\widehat{\phi}'(0) = 0$. This implies $\phi'(0) = 0$. The second fundamental form of the orbit is given by $II = \frac{1}{2}\partial_r \ga_r = \phi\phi'\,\ga_P$, which vanishes identically at $r=0$. Thus, $Q$ is totally geodesic. An identical argument at the endpoint $r=L$ yields $\phi'(L) = 0$.
\end{itemize}
\end{proof}

\begin{example}\label{ex:Bohm}
Fix $m \ge 2$ and consider the group diagram
$$H = SO(m)\ \subset\ K_- = K_+ = O(m)\ \subset\ G = SO(m+1),$$
where $O(m)\hookrightarrow SO(m+1)$ is defined by $A \mapsto \operatorname{diag}(\det A, A)$, and $H=SO(m)$ is its identity component. Let $M$ be the closed cohomogeneity-one $G$-manifold determined by this diagram \cites{Mostert, alexandrino2015lie}; it is the union of two copies of the twisted disk bundle $G \times_{O(m)} D^1$ glued along their common boundary $G/H \cong S^m$. Then $\dim M = m+1 \ge 3$, the principal orbit is $P = G/H = SO(m+1)/SO(m) \cong S^m$, and the two non-principal orbits are $G/K_\pm = SO(m+1)/O(m) \cong \mathbb{RP}^m$.

Because $K_\pm/H \cong O(m)/SO(m) \cong \mathbb{Z}_2 \cong S^0$, both non-principal orbits are \emph{exceptional}; they share the dimension $m$ with $P$, and $P$ acts as a two-fold covering over them. The action contains no zero-dimensional orbits, and the orbit space is $M/G \cong [0,L]$. The isotropy representation of $SO(m)$ on $\mathfrak{m} = T_{eH}S^m \cong \mathbb{R}^m$ is the standard representation, which is irreducible over $\mathbb{R}$ for $m \ge 2$. Thus, $P$ is isotropy irreducible and carries a unique $G$-invariant metric $\ga_P$ up to scaling, which is the round metric with $\operatorname{scal}_{\ga_P} = m(m-1) > 0$.

Following Example~\ref{eq:product-of}, isotropy irreducibility dictates that every $G$-invariant metric is a warped product $\ga = \mathrm{d}r^2 + \phi(r)^2 \ga_P$ on $[0,L]$ with $\phi > 0$. By Lemma~\ref{lem:smooth}(iv),
$$\phi(0),\,\phi(L) > 0, \qquad \phi'(0) = \phi'(L) = 0.$$
The scalar curvature is given by
$$\operatorname{scal}_{\ga} = \frac{m(m-1)}{\phi^2} - 2m\,\frac{\phi''}{\phi} - m(m-1)\,\frac{(\phi')^2}{\phi^2}, \qquad \mathrm{d}\mu_{\ga} = \phi^{m}\,\mathrm{d}r \wedge \mathrm{d}\mu_{\ga_P}.$$
Integration by parts on $[0,L]$ eliminates the boundary term $-2m\,[\phi'\phi^{m-1}]_0^L$ because $\phi'(0)=\phi'(L)=0$, yielding
$$\int_M \operatorname{scal}_{\ga}\,\mathrm{d}\mu_{\ga} = \operatorname{Vol}(P)\,m(m-1)\int_0^L \bigl[\phi^{m-2} + (\phi')^2\phi^{m-2}\bigr]\,\mathrm{d}r \; > \; 0.$$
Therefore, $(M, G)$ is totally $G$-positive, defined here with an interval as the orbit space.
\end{example}

\begin{lemma}\label{lem:dichotomy}
$(M, G)$ is totally $G$-positive if and only if it is of cohomogeneity one with an isotropy irreducible principal orbit. Moreover, if $(M,G)$ is not totally $G$-positive, then it admits a $G$-invariant metric $\ga_1$ satisfying $\int_M\mathrm{scal}_{\ga_1}\,\mathrm{d}\mu_{\ga_1}<0$.
\end{lemma}
\begin{proof}
Throughout the proof, $(M,G)$ is assumed to have no zero-dimensional orbits and $m := n-1 = \dim P \ge 2$.

\smallskip
\noindent\emph{$(\Leftarrow)$ Cohomogeneity one with irreducible isotropy $\Rightarrow$ totally $G$-positive.} 
Assume the action is of cohomogeneity one with an isotropy irreducible principal orbit $P=G/H$. By Lemma~\ref{lem:smooth}, the orbit space is $M/G \cong S^1$ or $[0,L]$, and every $G$-invariant metric is a warped product
$$\ga=\mathrm{d}r^2+\phi(r)^2\,\ga_P,\qquad \phi>0,$$
where $\ga_P$ is the unique-up-to-scale invariant metric on $P$. The function $\phi$ is either $L$-periodic (for $M/G\cong S^1$) or smooth and even across each endpoint with $\phi'(0)=\phi'(L)=0$ (for $M/G\cong[0,L]$).

By \cite{besse1987einstein}*{Cor.~7.44} and \cite{Wolf1968}, $(P,\ga_P)$ is Einstein, satisfying $\Ricci_{\ga_P}=\tfrac{\rho}{m}\,\ga_P$, where $\rho := \mathrm{scal}_{\ga_P}$ is constant. Furthermore, $\rho > 0$. If $\rho < 0$, Bochner's theorem \cite{Petersen}*{Ch.~7, Thm.~36 and Cor.~15} dictates that the isometry group of the compact manifold $(P,\ga_P)$ is finite, contradicting transitivity. If $\rho = 0$, the manifold $(P,\ga_P)$ is homogeneous and Ricci-flat, therefore flat according to Alekseevskii--Kimelfeld \cite{Alekseevskii1975}, \cite{besse1987einstein}*{Theorem~7.61}. A compact flat homogeneous space is a torus, which possesses a trivial isotropy representation. For $m \ge 2$, a trivial representation is reducible, yielding a contradiction.

The scalar curvature of $\ga=\mathrm{d}r^2+\phi^2\ga_P$ is given by \cite{besse1987einstein}*{Prop.~9.106} as
$$\mathrm{scal}_\ga=\frac{\rho}{\phi^2}-2m\frac{\phi''}{\phi}-m(m-1)\frac{(\phi')^2}{\phi^2}, \qquad \mathrm{d}\mu_\ga=\phi^{m}\,\mathrm{d}r \wedge \mathrm{d}\mu_{\ga_P},$$
which implies
$$\mathrm{scal}_\ga\,\mathrm{d}\mu_\ga=\bigl(\rho\,\phi^{m-2}-2m\,\phi''\phi^{m-1}-m(m-1)(\phi')^2\phi^{m-2}\bigr)\,\mathrm{d}r \wedge \mathrm{d}\mu_{\ga_P}.$$
Integrating the middle term by parts over the domain of length $L$ yields
$$\int_0^L\phi''\phi^{m-1}\,\mathrm{d}r=\bigl[\phi'\phi^{m-1}\bigr]_0^L-(m-1)\int_0^L(\phi')^2\phi^{m-2}\,\mathrm{d}r.$$
The boundary term vanishes in both scenarios: due to the $L$-periodicity of $\phi$ when $M/G\cong S^1$, and because $\phi'(0)=\phi'(L)=0$ when $M/G\cong[0,L]$ (Lemma~\ref{lem:smooth}(iv)). Hence, $-2m\int_0^L\phi''\phi^{m-1}\,\mathrm{d}r = 2m(m-1)\int_0^L(\phi')^2\phi^{m-2}\,\mathrm{d}r$. Combining this with the remaining term results in
$$\int_M\mathrm{scal}_\ga\,\mathrm{d}\mu_\ga=\mathrm{Vol}(P,\ga_P)\int_0^L\bigl(\rho\,\phi^{m-2}+m(m-1)(\phi')^2\phi^{m-2}\bigr)\,\mathrm{d}r>0,$$
since $\rho>0$ and $\phi>0$. Because $\ga$ was an arbitrary invariant metric, $(M,G)$ is totally $G$-positive.

\smallskip
\noindent\emph{(``Moreover'') If $(M,G)$ is not totally $G$-positive, it admits a $G$-invariant metric of negative total scalar curvature.} 
By the contrapositive of the sufficiency condition, such a pair is not of cohomogeneity one with irreducible isotropy. Therefore, either $\dim(M/G)\ge2$, or $\dim(M/G)=1$ with reducible principal isotropy.

If $\dim(M/G)\ge2$, Lemma~\ref{lem:negativetotalscal} provides a $G$-invariant $\ga_1$ satisfying $\int_M\mathrm{scal}_{\ga_1}\,\mathrm{d}\mu_{\ga_1}<0$.

If $\dim(M/G)=1$ with reducible isotropy, fix a nontrivial $\Ad(H)$-invariant splitting $\mathfrak{m}=\mathfrak{m}_1\oplus\mathfrak{m}_2$, where $d_i := \dim\mathfrak{m}_i \ge 1$ and $d_1+d_2=m$, along with an adapted invariant metric $\bar{g}=\bar{g}_1\oplus\bar{g}_2$ ($\bar{g}_i$ on $\mathfrak{m}_i$, $\mathfrak{m}_1\perp\mathfrak{m}_2$). The existence of such a metric is guaranteed by the compactness of $H$. Choose a principal orbit and a regular collar $U=P\times(a,b)$. One can construct an invariant reference metric $g_{\mathrm{ref}}$ that matches the product $\mathrm{d}r^2+\bar{g}$ on $U$ and closes smoothly at the non-principal orbits by averaging an arbitrary metric over $G$ and interpolating it with a cutoff function. For $t\in C^\infty_c(a,b)$, define
$$g_r := e^{2t/d_1}\,\bar{g}_1 \oplus e^{-2t/d_2}\,\bar{g}_2,\qquad \ga_1 := \begin{cases}\mathrm{d}r^2+g_r,&\text{on }U,\\ g_{\mathrm{ref}},&\text{on }M\setminus U.\end{cases}$$
Because $t$ has compact support, $\ga_1=\mathrm{d}r^2+\bar{g}$ near $\partial U$ and $\ga_1=g_{\mathrm{ref}}$ outside the support region. Thus, $\ga_1$ is a globally smooth $G$-invariant metric that remains fixed at $g_{\mathrm{ref}}$ near every non-principal orbit. 

Because $\det g_r=\det\bar{g}$, the volume is preserved, $\mathrm{Vol}(P,g_r)\equiv V := \mathrm{Vol}(P,\bar{g})$, and the orbits are minimal, $H=\partial_r\ln\sqrt{\det g_r}=0$. The shape operator $S=\frac{1}{2} g_r^{-1}\partial_r g_r$ restricts to $\frac{t'}{d_1}\id$ on $\mathfrak{m}_1$ and $-\frac{t'}{d_2}\id$ on $\mathfrak{m}_2$. Consequently, on $U$,
$$H=0,\qquad |II|^2=\mathrm{tr}(S^2)=(t')^2\Bigl(\frac{1}{d_1}+\frac{1}{d_2}\Bigr).$$
With $\partial_r$ acting as a unit geodesic normal field, the contracted Gauss equation and the trace of the radial Riccati equation \cite{Petersen}*{Ch.~2, \S4} imply
$$\mathrm{scal}_{\ga_1}=\mathrm{scal}_{g_r}-2\,\partial_r H-H^2-|II|^2=\mathrm{scal}_{g_r}-(t')^2\Bigl(\frac{1}{d_1}+\frac{1}{d_2}\Bigr)\quad\text{on }U,$$
where $\mathrm{scal}_{g_r}$ depends on $r$ exclusively through $t(r)$. Because $\mathrm{Vol}(P,g_r)\equiv V$ and $\ga_1=g_{\mathrm{ref}}$ outside $U$,
$$\int_M\mathrm{scal}_{\ga_1}\,\mathrm{d}\mu_{\ga_1}=A+V\!\int_a^b\!\mathrm{scal}_{g_r}\,\mathrm{d}r-V\Bigl(\frac{1}{d_1}+\frac{1}{d_2}\Bigr)\!\int_a^b\!(t')^2\,\mathrm{d}r,$$
where $A := \int_{M\setminus U}\mathrm{scal}_{g_{\mathrm{ref}}}\,\mathrm{d}\mu_{g_{\mathrm{ref}}}$.

Restrict the deformation to $\|t\|_{C^0}\le 1$. The metric $g_r$ then ranges over a compact family, implying a uniform bound $|\mathrm{scal}_{g_r}|\le C_1$. Consequently, the first two integral terms are bounded by $A+VC_1(b-a)$ independently of $t'$. Defining $t(r) := \chi(r)\sin(\omega r)$ with a test function $\chi\in C^\infty_c(a,b)$ that equals $1$ on a central subinterval guarantees $\|t\|_{C^0}\le 1$, whereas $\int_a^b(t')^2\,\mathrm{d}r\to\infty$ as $\omega\to\infty$. Therefore, $\int_M\mathrm{scal}_{\ga_1}\,\mathrm{d}\mu_{\ga_1}\to-\infty$, ensuring strict negativity for sufficiently large $\omega$.

Consequently, the sufficiency condition and its contrapositive establish both the equivalence and the existence of the strictly negative scalar curvature metric.
\end{proof}

\ 

\begin{proof}[Proof of Theorem~\ref{theorem:invariantyamabe}]
\textbf{(2)} This is the content of Proposition~\ref{thm:invariantkazdan}.

\textbf{(3)} If the action is of cohomogeneity one with isotropy-irreducible
principal orbit, Lemma~\ref{lem:dichotomy} shows $(M,G)$ is totally $G$-positive, i.e.
$\int_M\mathrm{scal}_\ga\,\mathrm{d}\mu_\ga>0$ for every $G$-invariant $\ga$. Any $G$-invariant metric $\ga$ with $\mathrm{scal}_\ga\le 0$ everywhere would satisfy
$\int_M\mathrm{scal}_\ga\,\mathrm{d}\mu_\ga\le 0$, contradicting total $G$-positivity. Hence, $M$ carries no $G$-invariant metric with everywhere non-positive scalar curvature.

Moreover, the product metric $\mathrm{d} r^2+\ga_P$ is a smooth $G$-invariant metric on $M$: in the
circle case this is immediate, while in the interval case its pullback $\mathrm{d} s^2+\ga_P$ to the
double cover $\widehat N$ of Lemma~\ref{lem:smooth}(iv) is invariant under the deck involution
$\sigma$ (since $\tau^*\ga_P=\ga_P$), hence descends smoothly across each exceptional orbit.
Its scalar curvature is the constant $\rho>0$. Hence, $(M, G)$ admits a metric
of positive constant scalar curvature.

\textbf{(1)} Assume $(M,G)$ is not totally $G$-positive.
By Lemma~\ref{lem:dichotomy} there is a $G$-invariant metric $\ga_1$ with
$\int_M\mathrm{scal}_{\ga_1}\,\mathrm{d}\mu_{\ga_1}<0$, and Theorem~\ref{thm:negativeexistence} produces
a $G$-invariant metric $\ga_-$ with $\mathrm{scal}_{\ga_-}\equiv-1$. This proves the first claim.

Suppose moreover $(M,G)$ admits a $G$-invariant metric $\ga$ with $\mathrm{scal}_\ga\ge0$. If
$\mathrm{scal}_\ga\equiv0$ we are done. Otherwise, $\mathrm{scal}_\ga$ is non-negative and not
identically zero, so by (2) there is a $G$-invariant metric $\ga_+$ with
$\mathrm{scal}_{\ga_+}\equiv k>0$. Consider the segment of $G$-invariant metrics
\[
\ga_t=(1-t)\ga_-+t\,\ga_+,\qquad t\in[0,1],
\]
each smooth and $G$-invariant since the invariant metrics form an open convex cone. The lowest
eigenvalue of the conformal Laplacian $L_t=-4b_n\Delta_{\ga_t}+\mathrm{scal}_{\ga_t}$,
\[
\lambda_1(t):=\inf_{0\not\equiv u\in W^{1,2}(M)}
\frac{\int_M\!\big(4b_n|\nabla u|_{\ga_t}^2+\mathrm{scal}_{\ga_t}u^2\big)\,\mathrm{d}\mu_{\ga_t}}
{\int_M u^2\,\mathrm{d}\mu_{\ga_t}},
\]
depends continuously on $t$. Testing $u\equiv1$ at $t=0$ gives
$\lambda_1(0)\le\int_M(-1)\,\mathrm{d}\mu_{\ga_0}/\int_M\mathrm{d}\mu_{\ga_0}=-1<0$, while at $t=1$ the gradient term is non-negative and $\mathrm{scal}_{\ga_+}\equiv k$,
so $\lambda_1(1)\ge k>0$. By
the intermediate value theorem $\lambda_1(t_0)=0$ for some $t_0\in(0,1)$.

The first eigenvalue of $L_{t_0}$ being $0$, the maximum principle gives a positive smooth
eigenfunction $u$ spanning the (one-dimensional) eigenspace. Since $\ga_{t_0}$ is
$G$-invariant, $L_{t_0}$ commutes with the $G$-action, so $g^*u=c(g)u$ with $c(g)>0$. Because
$c\colon G\to(\mathbb R_{>0},\times)$ is a continuous homomorphism with compact image, we have that
$c\equiv1$ and thus $u$ is $G$-invariant. The metric $\widetilde\ga=u^{4/(n-2)}\ga_{t_0}$ is
$G$-invariant and $\mathrm{scal}_{\widetilde\ga}=u^{-\frac{n+2}{n-2}}L_{t_0}u=0.$\end{proof}

\ 

\section{Proof of Theorem \ref{ithm:main}}
\label{sec:classification}

\textbf{(1) Totally $G$-positive.} By Lemma~\ref{lem:dichotomy} and Theorem~\ref{theorem:invariantyamabe}(3), $(M, G)$ admits a $G$-invariant metric of positive constant scalar curvature, but no $G$-invariant metric with $\mathrm{scal}\le0$ everywhere. Equivalently,
(total $G$-positivity) every invariant scalar curvature is positive somewhere. Thus no $f\le0$ is
realized, so $(M,G)$ lies in none of $\mathscr P^G,\mathscr Z^G,\mathscr N^G$.

\medskip
\textbf{(2) Not totally $G$-positive.} Theorem~\ref{theorem:invariantyamabe}(1) ensures the existence of $\ga_{-1}$
with $\mathrm{scal}_{\ga_{-1}}\equiv-1$. The classification rests on one construction.

\begin{lemma}\label{lem:realization}
    Fix $C\in\{+1,-1\}$ for which a $G$-invariant metric
$\ga_C$ with $\mathrm{scal}_{\ga_C}\equiv C$ exists. Then every non-constant $f\in C^\infty_G(M)$ with
$C\bar f>0$ somewhere on $X^*$ is the scalar curvature of a $G$-invariant metric.
\end{lemma}
\begin{proof}
If $\dim X^*\ge2$, choose $c>0$ with $c\min_M f<C<c\max_M f$. Notice that such a $c$ exists:
if $C=+1$, then $\max_M f>0$, and any $c>1/\max_M f$ works when $\min_M f\le 0$, while
any $c\in(1/\max_M f,\,1/\min_M f)$ works when $\min_M f>0$. The case $C=-1$ is symmetric. Since
$\mathrm{scal}_{\ga_C}\equiv C\in(c\min_M f,\,c\max_M f)$, Theorem~\ref{theorem:GKW}(a) applied
to $\ga_C$ realizes $f$.

If $\dim X^*=1$ (where $X\cong[0, L]$ or $S^1$), choose $x_0\in X^*$ such that $C\bar f(x_0)>0$,
and define $c:=C/\bar f(x_0)>0$. Fix $\eta>0$ and a smooth weakly monotone surjective map
$h\colon X\to X$. In the interval case, require $h$ to be locally constant at $0$ and $L$ on a
neighborhood of the boundary, and equal to the constant $x_0$ outside a set whose measure
is bounded by a function that vanishes as $\eta \to 0$. In the circle case, require $h$ to have degree one and equal $x_0$ outside a set whose measure satisfies the identical requirement.

Since $h$ is locally constant near the non-principal orbits, the function
$c\,(\bar f\circ h)\circ\pi$ is smooth on $M$. Then
$\bar f\circ h$ is non-constant and $c\,(\bar f\circ h)\circ\pi\to C$ in $L^p$ as $\eta\to0$. By
Lemma~\ref{lem:dense_kernel} perturb $\ga_C$ to $\ga$ with $\ker A^*_\ga=\{0\}$ and $\mathrm{scal}_\ga$
arbitrarily $L^p$-close to $C$. Lemma~\ref{lem:inversefunction} yields a $G$-invariant
$\ga^*$ with $\mathrm{scal}_{\ga^*}=c\,(\bar f\circ h)\circ\pi$ once $\eta$ and the perturbation are small,
i.e.\ $\overline{\mathrm{scal}}_{\ga^*}=c\,\bar f\circ h$. Theorem~\ref{theorem:GKW}(b) (resp.\ (c))
realizes $f$. 
\end{proof}

\smallskip
\noindent Since $\ga_{-1}$ always provides $C=-1$, Lemma \ref{lem:realization} realizes \emph{every
non-constant $f$ that is negative somewhere}. If a positive-constant-scalar metric also exists,
$C=+1$ realizes \emph{every non-constant $f$ positive somewhere}. Constant $f$ is realized by (possibly after scaling)
whichever of the positive / zero / negative constant-scalar metrics exists.

Next, recall that each pair admits a metric $\ga_{-1}$ with constant negative scalar curvature. Consequently, for any pair $(M,G)$, exactly one of the following conditions holds:
\textbf{Category \,1.} a positive-constant-scalar metric exists; \textbf{Category \,2.} a metric with
$\mathrm{scal}\equiv0$ exists, but no positive constant one does; \textbf{Category \,3.} no metric with $\mathrm{scal}\ge0$ exists.
By Proposition~\ref{thm:invariantkazdan}, any metric with $\mathrm{scal}\ge0$ but $\not\equiv0$ forces
Cat.\,1. Thus, in Cats.\,2--3 no such metric exists.

\emph{Cat.\,1 $\Rightarrow\mathscr P^G$.} Both $C=\pm1$ are available, and every non-constant $f$ is
positive or negative somewhere, hence realized. Every constant is realized (positive by Cat.\,1,
zero by Theorem~\ref{theorem:invariantyamabe}(1), negative by $\ga_{-1}$).

\emph{Cat.\,2 $\Rightarrow\mathscr Z^G$.} $f\equiv0$ uses the zero-scalar metric; $f$ negative
somewhere uses the Lemma \ref{lem:realization} with $C=-1$. Conversely, a realized $f\ge0$ which is not identically zero 
would force Cat.\,1 (Proposition~\ref{thm:invariantkazdan}). So $f$ is realized if, and only if, $f\equiv0$ or
$f$ is negative somewhere.

\emph{Cat.\,3 $\Rightarrow\mathscr N^G$.} $f$ negative somewhere uses the Lemma \ref{lem:realization} with
$C=-1$. Conversely, no metric has $\mathrm{scal}\ge0$, so every metric, and hence every realized $f$, is
negative somewhere.

\medskip
\textbf{Non-abelian case.} If $\mathfrak g$ is non-abelian, then by Lawson--Yau
\cite{lawson-yau}*{Theorem~2} the effective action of the compact connected non-abelian group
$G$ admits a $G$-invariant metric of positive scalar curvature. Hence, by
Theorem~\ref{theorem:invariantyamabe}(2), $(M,G)$ carries a $G$-invariant metric of positive
constant scalar curvature, so $(M,G)$ is in Cat.\,1 and $(M,G)\in\mathscr P^G$. \hfill$\square$

\bigskip

\begin{flushleft}
 {\bf Funding:} Part of this work was developed between the end of the first-named author's (L. F. Cavenaghi) postdoctoral position at the Federal University of Paraíba (UFPB) and the University of Fribourg, supported partly by the SNSF-Project 200020E\_193062 and the DFG-Priority Program SPP 2026. He takes the opportunity to thank the University of Fribourg for its hospitality, as well as the freedom and confidence of Prof. Anand Dessai during the period he supervised him. L. F. Cavenaghi was supported by The São Paulo Research Foundation (FAPESP), grants 2023/14316-1 and 2022/09603-9, and currently is supported by the Simons Foundation, grant SFI-MPS-T-Institutes-00007697, and the Ministry of Education and Science of the Republic of Bulgaria, grant DO1-239/10.12.2024.  J. M. do \'O acknowledges partial support from CNPq through grants 312340/2021-4, 409764/2023-0, 443594/2023-6, CAPES MATH AMSUD 
 grant 88887.878894/2023-00
and Para\'iba State Research Foundation (FAPESQ), grant no 3034/2021.     \\ 
 {\bf Ethical Approval:}  Not applicable.\\
 {\bf Competing interests:}  Not applicable. \\
 \noindent\textbf{Authors' contributions.}
All authors contributed to the conception of the work, the development of the
mathematical arguments, and the writing and revision of the manuscript. All
authors read and approved the final manuscript.\\
{\bf Availability of data and material:}  Not applicable.\\
{\bf Consent to participate:}  All authors consent to participate in this work.\\
{\bf Conflict of interest:} The authors declare no conflict of interest. \\
{\bf Consent for publication:}  All authors consent for publication. \\
\end{flushleft}

\bibliographystyle{plain}
\bibliography{references}
\end{document}